\documentclass[reqno]{amsart}

\usepackage[utf8]{inputenc}
\usepackage{amsmath}
\usepackage{amssymb}
\usepackage{mathrsfs}
\usepackage{graphics}
\usepackage[dvipdf]{graphicx}
\usepackage{geometry}
\usepackage{setspace}
\usepackage{color}
\usepackage{amsthm}
\usepackage{paralist}
\usepackage{dsfont}
\usepackage{verbatim}

 \usepackage[colorlinks=true]{hyperref}
\hypersetup{urlcolor=blue, citecolor=red}

  \textheight=8.2 true in
   \textwidth=5.0 true in
    \topmargin 30pt
     \setcounter{page}{1}


\newtheorem{theorem}{Theorem}[section]

\newtheorem{lemma}[theorem]{Lemma}
\newtheorem{proposition}{Proposition}

\theoremstyle{definition}

\newcommand{\ep}{\varepsilon}


\newcommand{\calH}{{\mathcal{H}}}
\newcommand{\calF}{{\mathcal{F}}}
\newcommand{\W}{\mathbf{W}}

\newcommand{\N}{{\mathbb{N}}}
\newcommand{\R}{{\mathbb{R}}}

\newcommand{\eins}{\mathbf{1}}

\newcommand{\dd}{\,\mathrm{d}}

\newcommand{\dv}{\mathrm{div}}

\newcommand{\X}{\mathbf{X}}
\newcommand{\ban}{\mathrm{Y}}
\newcommand{\id}{\operatorname{id}}
\newcommand{\argmin}{\operatornamewithlimits{argmin}}
\newcommand{\dg}{\W}

\renewcommand{\tilde}{\widetilde}
\renewcommand{\bar}{\overline}

\title[Generalized Fisher information]{The gradient flow of a generalized Fisher information functional with respect to modified Wasserstein distances}
\author[Jonathan Zinsl]{}
\email{zinsl@ma.tum.de}

\keywords{Fourth-order equations, gradient flow, nonlinear mobility, modified Wasserstein distance, Fisher information functional, minimizing movement scheme}

\subjclass[2010]{Primary: 35K35; Secondary: 35A15, 35D30.}

\begin{document}

\maketitle

\centerline{\scshape Jonathan Zinsl}
\medskip
{\footnotesize
 \centerline{Zentrum f\"ur Mathematik }
   \centerline{Technische Universit\"at M\"unchen}
   \centerline{85747 Garching, Germany}
} 

\bigskip

\begin{abstract}
This article is concerned with the existence of nonnegative weak solutions to a particular fourth-order partial differential equation: it is a formal gradient flow with respect to a generalized Wasserstein transportation distance with nonlinear mobility. The corresponding free energy functional is referred to as \emph{generalized Fisher information functional} since it is obtained by autodissipation of another energy functional which generates the heat flow as its gradient flow with respect to the aforementioned distance. Our main results are twofold: For mobility functions satisfying a certain regularity condition, we show the existence of weak solutions by construction with the well-known minimizing movement scheme for gradient flows. Furthermore, we extend these results to a more general class of mobility functions: a weak solution can be obtained by approximation with weak solutions of the problem with regularized mobility.
\end{abstract}

\section{Introduction}\label{sec:intro}
This work is concerned with the existence of nonnegative weak solutions $u:[0,\infty)\times\Omega\to [0,\infty)$ to the partial differential equation
\begin{align}
\partial_t u(t,x)&=\dv \left(m(u(t,x)) \nabla \frac{\delta \calF}{\delta u}(u(t,x))\right)\label{eq:pde}
\end{align}
for $(t,x)\in(0,\infty) \times \Omega$, where $\Omega\subset\R^d$ $(d\ge 1)$ is a bounded and convex domain with smooth boundary $\partial\Omega$ and exterior unit normal vector field $\nu$. The assumption on the \emph{mobility function} $m$ and the \emph{free energy functional} $\calF$ are specified below. Above, $\frac{\delta \calF}{\delta u}$ denotes the first variation of $\calF$ in $L^2$. Additionally, the sought-for solution $u$ to \eqref{eq:pde} is subject to the no-flux and homogeneous Neumann boundary conditions
\begin{align}
m(u(t,x))\partial_\nu \frac{\delta \calF}{\delta u}(u(t,x))   = 0 &= \partial_\nu u(t,x)  \label{eq:pdebc}
\end{align}
for $t>0$ and $x\in \partial\Omega$, and to the initial condition 
\begin{align}
u(0,\cdot) = u_0\in L^1(\Omega),\text{ with }u_0 \geq 0 \text{ and }  \int_\Omega u_0(x)\dd x=U,\label{eq:ic}
\end{align}
for some fixed $U>0$.\\

Formally, \eqref{eq:pde} possesses a gradient flow structure with respect to a modified version $\W_m$ of the $L^2$-Wasserstein distance on the space of probability measures, reading as
\begin{align}\label{eq:Wm}
\W_m(u_0,u_1)&=\left(\inf_{(u_s,w_s)\in\mathscr{C}}\int_0^1\int_\Omega \frac{|w_s|^2}{m(u_s)}\dd x\dd t\right)^{1/2},
\end{align}
where the admissible set $\mathscr{C}$ is a suitable subclass of curves $(u_s,w_s)_{s\in[0,1]}$ satisfying the \emph{continuity equation} $\partial_s u_s=-\dv\,w_s$ on $[0,1]\times\Omega$ and the initial and terminal conditions $u_s|_{s=0}=u_0$, $u_s|_{s=1}=u_1$. We refer to \cite{dns2009,lisini2010} (see also \cite{carrillo2010,zm2014}) for more details concerning the definition and properties of $\W_m$. 

Before stating our assumptions on the mobility function $m$, let us consider the linear case $m=\id$. Then, it is well-known \cite{brenier2000} that $\W_m$ coincides (up to a scaling factor depending on the value of $U$) with the classical $L^2$-Wasserstein distance $\W_2$ for probability measures on $\Omega$. Using techniques from optimal transportation theory (see, for instance, \cite{villani2003}), various equations of the form \eqref{eq:pde} for $m=\id$ have been interpreted as \emph{gradient systems} with respect to $\W_2$. The specific gradient flow structure often allows for the analysis of the well-posedness of the underlying evolution equation as well as for the study of the qualitative behaviour of the associated gradient flow solutions \cite{jko1998,otto2001}. A powerful tool for those investigations is provided by the so-called \emph{minimizing movement scheme} (cf. \eqref{eq:mms} below), a metric version of the implicit Euler scheme, which is used for the construction of a time-discrete approximative gradient flow. Even in very general situations, this approach can be of use (see, for example, \cite{savare2008} for an abstract description or \cite{blanchet2012,zm2014} in the context of coupled systems). A certain class of equations of the form \eqref{eq:pde} already has successfully been proved to be well-posed by Lisini, Matthes and Savar\'{e} \cite{lisini2012} using its variational structure in spaces w.r.t. the distance $\W_m$. This work aims at a further extension.

In their seminal paper \cite{jko1998} on the variational formulation of the Fokker-Planck equation, Jordan, Kinderlehrer and Otto rigorously interpreted the \emph{heat equation}
\begin{align*}
\partial_t u&=\Delta u
\end{align*}
as $\W_2$-gradient flow of \emph{Boltzmann's entropy}
\begin{align*}
\calH(u)&=\int_\Omega u\log u\dd x.
\end{align*}
Boltzmann's entropy $\calH$ shares an important property \cite{mccann1997}: it is $0$-convex along (generalized) geodesics with respect to the Wasserstein distance $\W_2$ and thus generates a $0$-contractive gradient flow (in the sense of Ambrosio, Gigli and Savar\'{e} \cite{savare2008}) along which $\calH$ descreases most. The corresponding \emph{dissipation} of $\calH$ over time is given by the so-called \emph{Fisher information functional}
\begin{align*}
\calF(u)&=\int_\Omega 4|\nabla\sqrt{u}|^2\dd x.
\end{align*}
Formally, $\calF$ induces the (fourth-order) \emph{Derrida-Lebowitz-Speer-Spohn equation}
\begin{align*}
\partial_t u&=\dv\left(u\nabla\left(\frac{\Delta\sqrt{u}}{\sqrt{u}}\right)\right)
\end{align*}
as $\W_2$-gradient flow. A thorough analysis of the relationship between $\calH$, $\calF$ and their corresponding evolution equations has been done by Gianazza, Savar\'{e} and Toscani \cite{gianazza2009}, later generalized by Matthes, McCann and Savar\'{e} \cite{matthes2009} to more general energy functionals in the Wasserstein framework. Even if the Fisher information functional $\calF$ does not admit a convexity condition along geodesics w.r.t. $\W_2$, results on existence and convergence to equilibrium can be deduced using the fact that $\calF=|\partial\calH|^2$, i.e., being the squared \emph{Wasserstein slope} of Boltzmann's entropy $\calH$. The main aim of this work is to extend this specific connection to nonlinear mobility functions $m$ and the generalized Wasserstein distance $\W_m$.

However, in this case, the structure of the slope w.r.t. $\W_m$ is not known explicitly. Moreover, convexity along geodesics in this space is a very rare property (cf. \cite{carrillo2010,zm2014}), with the following exception: It is known that the \emph{heat entropy functional}
\begin{align*}
\calH(u)=\int_\Omega h(u(x))\dd x,\quad\text{with } h''(z)m(z)=1\text{ for all }z, 
\end{align*}
is $0$-convex along geodesics w.r.t. $\W_m$ and generates the heat flow as its \break $0$-contractive gradient flow. Formally, the dissipation of $\calH$ along its own gradient flow, i.e., its \emph{autodissipation}, reads as
\begin{align*}
-\frac{\dd}{\dd t}\calH(u(t))&=\int_\Omega m(u(t))|\nabla h'(u(t))|^2\dd x=\int_\Omega \frac{|\nabla u(t)|^2}{m(u(t))}\dd x\\&=\int_\Omega \left|\nabla\left(\int^{u(t)}\frac1{\sqrt{m(r)}}\dd r\right)\right|^2\dd x.
\end{align*}
This motivates our specific definition for the free energy functional $\calF$ to be considered in (the consequently fourth-order equation) \eqref{eq:pde}:
\begin{align}\label{eq:fisher}
\calF(u):=\int_\Omega \frac12|\nabla f(u(x))|^2\dd x,\quad\text{with } f(z):=\int_0^z\sqrt{\frac{2}{m(r)}}\dd r,
\end{align}
if $f(u)\in H^1(\Omega)$, and $\calF(u):=+\infty$ otherwise. We call $\calF$ the \emph{generalized Fisher information} functional associated to the mobility $m$. For linear mobility, functionals of the form \eqref{eq:fisher} for other choices of $f$ have already been studied in \cite{lmz2015}.\\

We impose the additional constraint $u(t,x)\le S$, where $S>0$ either is a fixed real number or equal to $+\infty$. The specific value of $S$ is determined by the structure of the mobility function $m$ which is assumed to satisfy the following.
\begin{align}\label{eq:M}
\tag{M}
\begin{split}
&m\in C^2((0,S));\quad m(z)>0\text{ and }m''(z)\le 0\text{ for all }z\in (0,S);\\&m(0)=0,\text{ and, if $S<\infty$, }m(S)=0.
\end{split}
\end{align}
For mobilities satisfying \eqref{eq:M}, one can define the distance $\W_m$ via formula \eqref{eq:Wm} on the space
\begin{align*}
\X:=\left\{u\in L^1(\Omega):~0\le u\le S\text{ a.e. in }\Omega,\,\int_\Omega u\dd x=U\right\},
\end{align*}
where $U>0$ is a fixed given number, see \cite{dns2009,lisini2010}. In this work, we use some of the topological properties of the metric space $(\X,\W_m)$. The respective statements are omitted here for the sake of brevity.\\

In the case $S=\infty$, we need an additional assumption on the growth of $m$ for large $z$:
\begin{align}\label{eq:MPG}
\tag{M-PG}
\begin{split}
&\text{There exist $\gamma_0,\gamma_1\in [0,1]$ with the additional requirement that }\\
&\text{$\frac{2-\gamma_0}{2-\gamma_1}<\frac{d}{d-2}$, if $d\ge 3$, such that:}\\
&\lim_{z\to\infty}\frac{m(z)}{z^{\gamma_0}}\in (0,\infty]\quad\text{and}\quad\lim_{z\to\infty}\frac{m(z)}{z^{\gamma_1}}\in[0,\infty).
\end{split}
\end{align}
An important subclass of mobilities is the following: we say that $m$ satisfies the LSC condition (compare to \cite{lisini2012}) if
\begin{align}\label{eq:MLSC}
\tag{M-LSC}
\begin{split}
\sup_{z\in (0,S)}|m'(z)|<\infty,\text{ and }\sup_{z\in (0,S)}(-m''(z)m(z))<\infty.
\end{split}
\end{align}
In particular, mobilities satisfying \eqref{eq:MLSC} are Lipschitz continuous. In order to also cover the framework of non-LSC mobilities, we will later require the following on the strength of the singularities of $m'$ at the boundary of $(0,S)$, if $m$ does not fulfill \eqref{eq:MLSC}:
\begin{align}
\label{eq:MS}
\tag{M-S}
\begin{split}
&\lim_{z\searrow 0}m'(z)^2 f(z)=0,\\
&\text{ and, if }S<\infty\text{, additionally }\lim_{z\nearrow S}m'(z)^2(f(S)-f(z))=0.
\end{split}
\end{align}
In particular, \eqref{eq:MS} is met by the paradigmatic examples $m(z)=z^\beta$ for $\beta\in(\frac23,1]$ and $S=\infty$ as well as by $m(z)=z^{\beta_1}(S-z)^{\beta_2}$ for $\beta_1,\beta_2\in(\frac23,1]$ and $S<\infty$. Compared to \cite{lisini2012} where $\beta\in (\frac12,1]$ is allowed, we need a slightly stricter condition on $m$ here due to the appearance of $m$ in the definition of $f$.\\

Our proof of existence of solutions to \eqref{eq:pde} with $\calF$ defined as in \eqref{eq:fisher} and mobilities satisfying \eqref{eq:M} and \eqref{eq:MLSC} relies on the formal gradient structure of equation \eqref{eq:pde}. We use the time-discrete \emph{minimizing movement scheme} for the construction of weak solutions in the space $\X$: given a step size $\tau>0$, define a sequence $(u_\tau^n)_{n\ge 0}$ in $\X$ recursively via
\begin{align}\label{eq:mms}
u_\tau^0:=u_0,\qquad u_\tau^n\in\argmin\limits_{u\in\X}\left(\frac1{2\tau}\W_m(u,u_\tau^{n-1})^2+\calF(u)\right)\quad\text{for }n\in\N.
\end{align}

With the sequence $(u_\tau^n)_{n\ge 0}$ from \eqref{eq:mms}, we define a time-discrete function \break $u_\tau:[0,\infty)\times\Omega\to [0,\infty]$ via piecewise constant interpolation:
\begin{align}\label{eq:pwc}
u_\tau(t):=u_\tau^n\qquad\text{if }t\in ((n-1)\tau,n\tau]\text{ for some }n\ge 1;\quad u_\tau(0):=u_0.
\end{align}

The resulting main theorem on the limit behaviour of $(u_{\tau})_{\tau>0}$ in the continuous time limit $\tau\searrow 0$ is as follows.

\begin{theorem}[Existence for Lipschitz mobilities]\label{thm:existLSC}
Assume that \eqref{eq:M} and \eqref{eq:MLSC} hold, and if $S=\infty$, let \eqref{eq:MPG} be satisfied. Let an initial condition $u_0\in\X$ with $\calF(u_0)<\infty$ be given.

Then, for each step size $\tau>0$, a time-discrete function $u_\tau:[0,\infty)\to\X$ can be constructed via the minimizing movement scheme \eqref{eq:mms}\&\eqref{eq:pwc}. Moreover, for each vanishing sequence $\tau_k\to 0$ of step sizes, there exists a (non-relabelled) subsequence and a limit function $u:[0,\infty)\to\X$ such that the following is true for arbitrary $T>0$:
\begin{enumerate}[(a)]
\item $u\in C^{1/2}([0,T];(\X,\W_m))\cap L^\infty([0,T];L^p(\Omega))$ for some $p>1$; and \break $f(u)\in L^\infty([0,T];H^1(\Omega))\cap L^2([0,T];H^2(\Omega))$.
\item $u_{\tau_k}$ converges to $u$ as $k\to\infty$ strongly in $L^p([0,T];L^p(\Omega))$ and pointwise with respect to $t\in [0,T]$ in $(\X,\W_m)$. $f(u_{\tau_k})$ converges to $f(u)$ as $k\to\infty$ strongly in $L^2([0,T];H^1(\Omega))$ and weakly in $L^2([0,T];H^2(\Omega))$.
\item For almost every $t\in[0,T]$, one has $\calF(u_{\tau_k}(t))\to\calF(u(t))$ as $k\to\infty$; and the map $t\mapsto \calF(u(t))$ is almost everywhere equal to a nonincreasing function.
\item The map $u$ is a solution to \eqref{eq:pde}--\eqref{eq:ic} in the sense of distributions.
\end{enumerate}
\end{theorem}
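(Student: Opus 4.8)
The plan is to follow the variational (minimizing-movement) strategy of Jordan--Kinderlehrer--Otto, supplementing the classical metric estimates with the second-order spatial regularity of $f(u)$ supplied by a flow-interchange argument in the spirit of Matthes--McCann--Savar\'e. First I would check that the scheme \eqref{eq:mms} is well posed. The functional $\calF$ is lower semicontinuous along $\W_m$-convergent sequences: if $u_k\to u$ with $\sup_k\calF(u_k)<\infty$, then $f(u_k)$ is bounded in $H^1(\Omega)$, so up to a subsequence it converges weakly in $H^1$ and strongly in $L^2$, and since $f$ is continuous and strictly increasing the limit is $f(u)$; weak lower semicontinuity of the Dirichlet energy then gives $\calF(u)\le\liminf\calF(u_k)$. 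Because $\X$ is bounded ($0\le u\le S$, $\int_\Omega u=U$) and $(\X,\W_m)$ has compact bounded subsets, sublevel sets of $u\mapsto\frac{1}{2\tau}\W_m(u,v)^2+\calF(u)$ are compact and a minimizer $u_\tau^n$ exists at each step. Comparing $u_\tau^n$ with the competitor $u_\tau^{n-1}$ yields
\begin{equation*}
\frac{1}{2\tau}\W_m(u_\tau^n,u_\tau^{n-1})^2+\calF(u_\tau^n)\le\calF(u_\tau^{n-1}),
\end{equation*}
so $\calF(u_\tau^n)$ is nonincreasing and $\sum_{n\ge1}\W_m(u_\tau^n,u_\tau^{n-1})^2\le 2\tau\,\calF(u_0)$, which is the discrete $\tfrac12$-H\"older estimate $\W_m(u_\tau(t),u_\tau(s))\le C(\calF(u_0))\,(|t-s|+\tau)^{1/2}$ controlling the time regularity.

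The crucial additional estimate, needed both to make sense of the fourth-order flux and to obtain $f(u)\in L^2(H^2)$, comes from a flow interchange with the heat flow as auxiliary flow. Since $h''(z)m(z)=1$, the $\W_m$-gradient flow of the heat entropy $\calH$ is exactly the linear heat equation $\partial_s\rho=\Delta\rho$, which—$\calH$ being $0$-convex along $\W_m$-geodesics—is a $0$-contractive gradient flow obeying the evolution variational inequality. Writing $\flow{s}{\calH}$ for the corresponding semigroup, minimality of $u_\tau^n$ against the competitor $\flow{s}{\calH}u_\tau^n$ together with the EVI gives, after dividing by $s$ and letting $s\searrow0$,
\begin{equation*}
\tau\,D\calF(u_\tau^n)\le\calH(u_\tau^{n-1})-\calH(u_\tau^n),\qquad D\calF(u):=\limsup_{s\searrow0}\frac{\calF(u)-\calF(\flow{s}{\calH}u)}{s}.
\end{equation*}
Summing telescopes the right-hand side to $\tau\sum_n D\calF(u_\tau^n)\le\calH(u_0)-\inf_\X\calH<\infty$ (finiteness of $\calH(u_0)$ for $S=\infty$ being guaranteed by \eqref{eq:MPG}). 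It then remains to evaluate the dissipation: differentiating $\calF$ along the heat flow and integrating by parts—using the Neumann conditions and convexity of $\Omega$ to handle the boundary contributions through a Bochner/Reilly-type identity—produces, thanks to $m''\le0$, a coercive lower bound $D\calF(u)\ge c\,\|f(u)\|_{H^2(\Omega)}^2-(\text{lower order})$. This converts the telescoped bound into the uniform estimate $\int_0^T\|f(u_\tau)\|_{H^2}^2\dd t\le C$.

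With these bounds the continuous-time limit $\tau_k\searrow0$ is obtained by compactness. The uniform $\tfrac12$-H\"older estimate in $(\X,\W_m)$ combined with compactness of bounded sets of $(\X,\W_m)$ yields, via a refined Arzel\`a--Ascoli theorem for metric-space-valued curves, a non-relabelled subsequence and a limit $u\in C^{1/2}([0,T];(\X,\W_m))$ with $u_{\tau_k}(t)\to u(t)$ in $(\X,\W_m)$ for every $t$. The bounds $\calF(u_\tau(t))\le\calF(u_0)$ and $\int_0^T\|f(u_\tau)\|_{H^2}^2$ give weak-$*$ compactness of $f(u_{\tau_k})$ in $L^\infty(H^1)$ and weak compactness in $L^2(H^2)$, and pointwise $\W_m$-convergence forces $u_{\tau_k}\to u$ a.e., identifying the limits as $f(u)$ by continuity of $f$. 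Strong $L^p([0,T];L^p(\Omega))$ convergence then follows from an Aubin--Lions/Rossi--Savar\'e argument: the spatial compactness comes from the $H^1$-bound on $f(u_\tau)$ (so sublevels of $\calF$ are precompact in $L^1$), the temporal equicontinuity from the $\W_m$-H\"older estimate, and the bound $0\le u\le S$ (together with \eqref{eq:MPG} when $S=\infty$) upgrades $L^1$ to $L^p$ with $p>1$.

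Finally, to reach the distributional formulation I would derive the discrete Euler--Lagrange equation. Perturbing $u_\tau^n$ by push-forward along the flow of an arbitrary smooth vector field and differentiating, the $\calF$-part contributes its first variation $\frac{\delta\calF}{\delta u}(u_\tau^n)=-f'(u_\tau^n)\Delta f(u_\tau^n)$, while the metric term contributes the optimal momentum connecting $u_\tau^{n-1}$ to $u_\tau^n$; this gives a discrete weak identity
\begin{equation*}
\int_\Omega\frac{u_\tau^n-u_\tau^{n-1}}{\tau}\,\varphi\dd x=-\int_\Omega m(u_\tau^n)\,\grd\Big(\tfrac{\delta\calF}{\delta u}(u_\tau^n)\Big)\cdot\grd\varphi\dd x+R_\tau^n,
\end{equation*}
with remainder $R_\tau^n$ controlled by $\W_m(u_\tau^n,u_\tau^{n-1})$ and hence summably small. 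Summing in $n$ and inserting the convergences above passes to the limit: the difference quotient tends to $\partial_t u$ distributionally, the nonlinear coefficients $m(u_{\tau_k})$ and $f'(u_{\tau_k})$ converge strongly (by $L^p$-convergence and the Lipschitz continuity of $m$ from \eqref{eq:MLSC}), and $\Delta f(u_{\tau_k})$ converges weakly in $L^2$, so all products converge. The main obstacle, in my view, is the flow-interchange step: establishing the heat flow as a bona fide $0$-contractive $\W_m$-gradient flow of $\calH$ and, above all, carrying out the Bochner-type integration by parts rigorously so that $D\calF$ coercively controls $\|f(u)\|_{H^2}$—tracking the boundary terms on $\partial\Omega$ and the degeneracy of $m$ at the endpoints $0$ (and $S$)—is the delicate technical heart on which the remainder of the argument rests.
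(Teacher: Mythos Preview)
Your outline is largely faithful to the paper's strategy up through the compactness step: well-posedness via the direct method, the classical energy and H\"older estimates, the flow interchange with the heat entropy $\calH$ to obtain the $L^2(H^2)$ bound on $f(u_\tau)$, and the Arzel\`a--Ascoli/Rossi--Savar\'e compactness argument are all as in the paper. The one substantive deviation is in how you derive the discrete weak formulation, and here there is a genuine gap.

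Your proposal to ``perturb $u_\tau^n$ by push-forward along the flow of an arbitrary smooth vector field'' and differentiate the metric term is the standard JKO device for the \emph{linear} Wasserstein distance $\W_2$, where Brenier's theorem and the explicit geodesic structure let one compute $\frac{d}{ds}\W_2((T_s)_\#u,v)^2$ at $s=0$. For the nonlinear-mobility distance $\W_m$ no such structure is available: there is no transport-map description, geodesics are not given by push-forward, and differentiating $\W_m$ along perturbations of this (or the adapted $\partial_s u=-\dv(m(u)\xi)$) type is not known to work in general. The paper avoids this entirely by using a \emph{second} flow interchange, now with the regularized potential energy $\mathcal{V}(u)=\beta\calH(u)+\int_\Omega u\phi\,\dd x$. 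Under \eqref{eq:M} and \eqref{eq:MLSC} this functional generates a $\kappa_\beta$-contractive $\W_m$-gradient flow (a viscous drift-diffusion), so the flow-interchange inequality applies and produces a two-sided \emph{discrete} weak formulation \eqref{eq:dweak} with error terms of size $\beta$ and $\tau\kappa_\beta$. Choosing $\beta_k=\sqrt{\tau_k}$ kills both errors in the limit. This also explains why the paper's weak formulation reads $\int f'(u)\Delta f(u)\,[\nabla m(u)\cdot\nabla\phi+m(u)\Delta\phi]$ rather than your $\int m(u)\nabla(\tfrac{\delta\calF}{\delta u})\cdot\nabla\phi$: the latter would require third derivatives of $f(u)$, which the $H^2$ bound does not supply, whereas the former needs only $\Delta f(u)\in L^2$ together with the strong $L^2$ convergence of $\nabla\sqrt{m(u_{\tau_k})}$, which the paper establishes separately via Vitali's theorem and the boundedness of $m'\circ g$ from \eqref{eq:MLSC}.
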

Notice that due to the non-convexity of the problem, we do not obtain uniqueness of solutions. The study of qualitative properties of $u$ with respect to (large) time is postponed to future research.\\

Our second main theorem extends the result to mobilities $m$ which are not Lipschitz continuous. We obtain a weak solution to \eqref{eq:pde} no longer by approximation via the minimizing movement scheme \eqref{eq:mms}, but as a limit of solutions of \eqref{eq:pde} for mobilities $m_\delta$ which are close to $m$ and satisfy \eqref{eq:MLSC}, as $\delta\to 0$. Specifically, we approximate $m$ by $m_\delta$ as in \cite{lisini2012}:\\

\noindent Let $\delta\in (0,\bar\delta)$ and $\bar\delta>0$ sufficiently small.\\
If $S<\infty$, define 
\begin{align}\label{eq:app1}
m_\delta(z):=m\left(\frac{z_{\delta,1}-z_{\delta,2}}{S}z-z_{\delta,1}\right)-\delta,
\end{align}
where $z_{\delta,1}<z_{\delta,2}$ are the two solutions of $m(z)=\delta$.\\
If $S=\infty$, define
\begin{align}\label{eq:app2}
m_\delta(z):=m(z+z_\delta)-\delta,
\end{align}
where $z_\delta$ is the unique solution of $m(z)=\delta$.\\

Since $m_\delta$ is constructed in such a way that \eqref{eq:MLSC} is fulfilled, there exists a weak solution $u_\delta$ to \eqref{eq:pde} with initial condition $u_0$ (which is assumed to be independent of $\delta$ and such that $\calF_\delta(u_0)<\infty$), in the sense of Theorem \ref{thm:existLSC}, for each $\delta$. To indicate the dependence of $\calF$ on $m$, we e.g. write $f_\delta$ and $\calF_\delta$ when $m_\delta$ is considered in place of $m$. Analysing the limit behaviour of the family $(u_\delta)_{\delta>0}$ as $\delta\searrow 0$, we find that the properties of $u_\delta$ carry over to the limit:
\begin{theorem}[Existence for non-Lipschitz mobilities]\label{thm:existnonLSC}
Assume that $m$ satisfies \eqref{eq:M}, \eqref{eq:MS}, and if $S=\infty$, also \eqref{eq:MPG}, and define $m_\delta$ for $\delta\in (0,\bar\delta)$ and sufficiently small $\bar\delta$ as in \eqref{eq:app1}/\eqref{eq:app2}. Let an initial condition $u_0\in\X$ with $\calF_{\bar\delta}(u_0)<\infty$ be given and denote by $u_\delta$ a weak solution to \eqref{eq:pde} with $m_\delta$ in place of $m$ and initial condition $u_0$ in the sense of Theorem \ref{thm:existLSC}, for each $\delta\in (0,\bar\delta)$. Then, there exists a vanishing sequence $\delta_k\to 0$ and a map $u:[0,\infty)\to\X$ such that for the sequence $(u_{\delta_k})_{k\in\N}$ and the limit $u$, one has
\begin{enumerate}[(a)]
\item $u\in C^{1/2}([0,T];(\X,\W_m))\cap L^\infty([0,T];L^p(\Omega))$ for some $p>1$; and \break $f(u)\in L^\infty([0,T];H^1(\Omega))\cap L^2([0,T];H^2(\Omega))$.
\item $u_{\delta_k}$ converges to $u$ as $k\to\infty$ strongly in $L^p([0,T];L^p(\Omega))$ and pointwise with respect to $t\in [0,T]$ in $(\X,\W_m)$. $f_{\delta_k}(u_{\delta_k})$ converges to $f(u)$ as $k\to\infty$ strongly in $L^2([0,T];H^1(\Omega))$ and weakly in $L^2([0,T];H^2(\Omega))$.
\item For almost every $t\in[0,T]$, one has $\calF_{\delta_k}(u_{\delta_k}(t))\to\calF(u(t))$ as $k\to\infty$; and the map $t\mapsto \calF(u(t))$ is almost everywhere equal to a nonincreasing function.
\item The map $u$ is a solution to \eqref{eq:pde}--\eqref{eq:ic} in the sense of distributions.
\end{enumerate}
\end{theorem}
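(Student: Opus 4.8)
The plan is to read Theorem~\ref{thm:existnonLSC} as a stability statement for the family $(u_\delta)_{\delta\in(0,\bar\delta)}$ produced by Theorem~\ref{thm:existLSC}: each $u_\delta$ already satisfies the conclusions (a)--(d) with $m_\delta,f_\delta,\calF_\delta$ replacing $m,f,\calF$, so the whole task is to make those bounds uniform in $\delta$ and then to pass to the limit $\delta\searrow0$. I would organise the argument in three blocks: $\delta$-uniform a priori estimates, extraction of a convergent subsequence together with identification of the limit, and passage to the limit in the weak formulation. For the energy I would exploit monotonicity in $\delta$: since $m$ is concave with $m(0)=0$ (and, if $S<\infty$, $m(S)=0$), the construction \eqref{eq:app1}/\eqref{eq:app2} yields $m_\delta\le m_{\delta'}\le m$ for $\delta'\le\delta$ (for $S=\infty$ this is the subadditivity $m(z+z_\delta)\le m(z)+m(z_\delta)=m(z)+\delta$). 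Hence $f_\delta'=\sqrt{2/m_\delta}$ is monotone in $\delta$ and $\calF_\delta(u_0)\le\calF_{\bar\delta}(u_0)<\infty$; combined with the almost-everywhere monotonicity of $t\mapsto\calF_\delta(u_\delta(t))$ from (c), this bounds $f_\delta(u_\delta)$ in $L^\infty([0,T];H^1(\Omega))$ uniformly in $\delta$.

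For the remaining uniform estimates I would use that the same concavity gives a $\delta$-uniform comparison $\W_m\le\W_{m_\delta}$ (the continuity-equation constraint in \eqref{eq:Wm} is identical, while $|w|^2/m_\delta\ge|w|^2/m$ pointwise). Consequently the Hölder-$\tfrac12$ bound of $u_\delta$ in $(\X,\W_{m_\delta})$, whose constant is controlled by $\sqrt{2\calF_\delta(u_0)}\le\sqrt{2\calF_{\bar\delta}(u_0)}$, transfers to a uniform Hölder-$\tfrac12$ bound of $u_\delta$ in $(\X,\W_m)$. The $L^\infty([0,T];L^p(\Omega))$ control is automatic when $S<\infty$ (then $0\le u_\delta\le S$) and follows from \eqref{eq:MPG} when $S=\infty$. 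The one genuinely delicate estimate is the $L^2([0,T];H^2(\Omega))$ bound on $f_\delta(u_\delta)$: it is produced in Theorem~\ref{thm:existLSC} by a flow-interchange computation, and one must verify that its constant does \emph{not} degenerate as $\delta\searrow0$. This is where \eqref{eq:MS} enters, as it bounds the singular contributions of $m_\delta'$ near the degenerate values $0$ and $S$ uniformly in $\delta$.

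With these $\delta$-uniform bounds in hand I would extract a subsequence. The uniform $\W_m$-equicontinuity in time together with the compactness of sublevels of $\calF$ in $(\X,\W_m)$ permits a refined Arzelà--Ascoli argument (in the spirit of \cite{savare2008}), giving a limit curve $u$ and pointwise-in-$t$ convergence $u_{\delta_k}(t)\to u(t)$ in $(\X,\W_m)$. The uniform $L^\infty(H^1)\cap L^2(H^2)$ bounds, combined with the time modulus, yield through an Aubin--Lions-type argument strong convergence of $f_{\delta_k}(u_{\delta_k})$ in $L^2([0,T];H^1(\Omega))$ and weak convergence in $L^2([0,T];H^2(\Omega))$. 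Since $m_\delta\to m$ locally uniformly forces $f_\delta\to f$ and $f_\delta^{-1}\to f^{-1}$ locally uniformly, the limit is identified as $f(u)$; inverting $f$ and invoking dominated convergence (using the $L^p$-bound) upgrades to strong $L^p([0,T];L^p(\Omega))$ convergence of $u_{\delta_k}$, giving (a)--(b). Item (c) follows from lower semicontinuity of $\calF$ along the convergence together with the uniform monotone-in-$t$ structure inherited from each $u_\delta$.

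It remains to prove (d). Writing $\tfrac{\delta\calF_\delta}{\delta u}=-f_\delta'(u_\delta)\,\Delta f_\delta(u_\delta)$ and integrating \eqref{eq:pde} by parts twice (so that no third derivatives of $f_\delta(u_\delta)$ appear), each $u_\delta$ satisfies, for admissible test functions $\phi$,
\begin{equation*}
-\int_0^T\!\!\int_\Omega u_\delta\,\partial_t\phi\dd x\dd t
=-\int_0^T\!\!\int_\Omega\Bigl[\sqrt{2m_\delta(u_\delta)}\,\Delta f_\delta(u_\delta)\,\Delta\phi
+m_\delta'(u_\delta)\,\Delta f_\delta(u_\delta)\,\bigl(\grd f_\delta(u_\delta)\cdot\grd\phi\bigr)\Bigr]\dd x\dd t .
\end{equation*}
The left-hand side and the first term on the right pass to the limit immediately: $u_{\delta_k}\to u$ strongly in $L^p$, $\sqrt{2m_{\delta_k}(u_{\delta_k})}\,\Delta\phi\to\sqrt{2m(u)}\,\Delta\phi$ strongly in $L^2$ (from a.e.\ convergence and the uniform $L^p$-bound), and $\Delta f_{\delta_k}(u_{\delta_k})\rightharpoonup\Delta f(u)$ weakly in $L^2$. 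The main obstacle is the second term, which is exactly the one that forced the introduction of \eqref{eq:MS}: for non-Lipschitz $m$ the factor $m_\delta'(u_\delta)$ is unbounded near $\{u_\delta\approx0\}$ and $\{u_\delta\approx S\}$. The key step will be to show that $m_{\delta_k}'(u_{\delta_k})\,\grd f_{\delta_k}(u_{\delta_k})\to m'(u)\,\grd f(u)$ strongly in $L^2$, which I would do by splitting the domain into a region where $u_{\delta_k}$ is bounded away from the degenerate values (where a.e.\ convergence and boundedness suffice) and a thin degenerate layer whose contribution is made uniformly small by \eqref{eq:MS} (through the control of $m'(z)^2f(z)$ near $0$ and $m'(z)^2(f(S)-f(z))$ near $S$). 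Pairing this strong limit with the weak $L^2$ limit of $\Delta f_{\delta_k}(u_{\delta_k})$ then closes the passage to the limit and yields (d); the initial datum is recovered from the $\W_m$-continuity of $u$ at $t=0$.
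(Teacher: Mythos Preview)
Your overall strategy matches the paper's: derive $\delta$-uniform a priori estimates (the paper's Lemma~\ref{lem:apri2}) using the monotonicities $m_\delta\le m$ and $\W_m\le\W_{m_\delta}$, extract a limit via Arzel\`a--Ascoli and Aubin--Lions, and then pass to the limit in the weak formulation. One correction on the a priori block: the $\delta$-uniform $L^2([0,T];H^2)$ bound on $f_\delta(u_\delta)$ does \emph{not} require \eqref{eq:MS}. The flow-interchange computation behind \eqref{eq:areg} rests on the inequality $f'''f'/(f'')^2=3-2m m''/(m')^2\ge 3$, which holds for any concave mobility satisfying \eqref{eq:M}; combined with the $\delta$-uniform bound on $\calH$ from Lemma~\ref{lem:HFest} (which also does not use \eqref{eq:MLSC}), this already gives a $\delta$-independent constant. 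Condition \eqref{eq:MS} enters only in part (d).

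The genuine gap is in your proof of the strong $L^2$ convergence of $m_{\delta_k}'(u_{\delta_k})\,\nabla f_{\delta_k}(u_{\delta_k})$. Your domain-splitting idea is the right intuition but does not close as stated: on the degenerate set $\{v_{\delta_k}\le\epsilon\}$ (with $v_{\delta_k}:=f_{\delta_k}(u_{\delta_k})$), condition \eqref{eq:MS} tells you that $m_{\delta_k}'(g_{\delta_k}(w))^2\, w$ is small, but this factors the integrand as a small quantity times $|\nabla v_{\delta_k}|^2/v_{\delta_k}=4|\nabla\sqrt{v_{\delta_k}}|^2$, and you have no a priori control on $\nabla\sqrt{v_{\delta_k}}$ from the $H^1$ or $H^2$ bounds on $v_{\delta_k}$ alone. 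The paper supplies exactly this missing ingredient: it rewrites (for $S=\infty$)
\[
m_{\delta_k}'(g_{\delta_k}(v_{\delta_k}))\,\nabla v_{\delta_k}=2\,G_{\delta_k}(v_{\delta_k})\,\nabla\sqrt{v_{\delta_k}},\qquad G_{\delta_k}(w):=m_{\delta_k}'(g_{\delta_k}(w))\sqrt{w},
\]
proves in a separate lemma (Lemma~\ref{lem:locuni}) that $G_{\delta_k}\to G$ locally uniformly (this is where \eqref{eq:MS} guarantees continuity of $G$ at $w=0$), and then invokes the Lions--Villani square-root estimate $\|\nabla\sqrt{v}\|_{L^4}^4\le C\|\nabla^2 v\|_{L^2}^2$ to bound $\nabla\sqrt{v_{\delta_k}}$ in $L^4$ by the already-available $L^2(H^2)$ norm. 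H\"older's inequality then yields $L^2$-uniform integrability of the product, and Vitali's theorem finishes. For $S<\infty$ the analogous rewriting uses $\tilde G_{\delta_k}(w)=m_{\delta_k}'(g_{\delta_k}(w))\sqrt{w(f_{\delta_k}(S)-w)}$ together with both $\nabla\sqrt{v_{\delta_k}}$ and $\nabla\sqrt{f_{\delta_k}(S)-v_{\delta_k}}$. Without the Lions--Villani estimate (or an equivalent device) your argument for the degenerate layer does not go through.
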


The plan of the paper is as follows. Section \ref{sec:lsc} is concerned with the proof of Theorem \ref{thm:existLSC}. We first study the discrete curves obtained by the minimizing movement scheme in Section \ref{subsec:mms} before passing to the continuous time limit in Section \ref{ssec:ctl}. In Section \ref{sec:nonlsc}, we show Theorem \ref{thm:existnonLSC}. Properties of the approximation with Lipschitz mobilities are investigated in Section \ref{ssec:approx}, its convergence in Section \ref{ssec:conv}

\section{Lipschitz mobility functions}\label{sec:lsc}
In this section, we prove Theorem \ref{thm:existLSC}. In advance of studying the properties of the scheme \eqref{eq:mms}\&\eqref{eq:pwc}, we first prove an auxiliary result on the relationship between $\calF$ and $\calH$. To this end, we make the following specific choice of the integrand $h$ of $\calH$ (compare with \cite{lisini2012}):
\begin{align}\label{eq:hdefi}
\text{For some fixed }s_0\in(0,S),\text{ set }h(z):=\int_{s_0}^z\frac{z-r}{m(r)}\dd r,\quad\text{for }z\in(0,S).
\end{align}
Note that the following statement does not require condition \eqref{eq:MLSC}.
\begin{lemma}[Estimate on $\calH$]\label{lem:HFest}
Assume that $m$ satisfies \eqref{eq:M}, and, if $S=\infty$, also \eqref{eq:MPG}. Then, there exist $C>0$ and $q\ge 1$ such that for all $u\in\X$ with $f(u)\in H^1(\Omega)$, one has 
\begin{align}\label{eq:Hup}
0\le\calH(u)&\le C(\calF(u)^q+1).
\end{align}
\end{lemma}
\begin{proof}
Note that $h(z)\ge 0$ for all $z\in (0,S)$ and $h(s_0)=0$. We first investigate the behaviour of $h$ as $z\searrow 0$: for $z<s_0$, one has
\begin{align*}
h(z)=\int_z^{s_0}\frac{r-z}{m(r)}\dd r&\le \int_z^{s_0}\frac{(r-z)z}{m(z)r}\dd r,
\end{align*}
where the last step follows from concavity of $m$, viz. $m(r)\ge \frac{m(z)}{z}r$. One directly verifies that the limit $\lim\limits_{z\searrow 0}\int_z^{s_0}\frac{(r-z)z}{m(z)r}\dd r$ exists. Thanks to the monotonicity of $h$ for $z< s_0$ (clearly, $h'(z)<0$ for $z<s_0$), also the limit $\lim\limits_{z\searrow 0}h(z)$ exists. If $S<\infty$, existence of the limit $\lim\limits_{z\nearrow S}h(z)$ follows in analogy. Hence, the integrand $h$ can be continuously extended onto the boundary of $(0,S)$. 

So, if $S<\infty$, $h$ is a bounded function. Hence, as $f$ is increasing, we have
\begin{align}\label{eq:hf2}
h(z)\le C(f(z)^2+1),
\end{align}
for some $C>0$ and all $z\in [0,S]$, from which then \eqref{eq:Hup} with $q=1$ follows using Poincar\'{e}'s inequality.

Consider the case $S=\infty$. By assumption \eqref{eq:MPG}, there exist $\tilde z>\max(s_0,1)$ and constants $C_0,C_1>0$ such that for all $z>\tilde z$:
\begin{align}\label{eq:potenz}
C_0z^{\gamma_0}\le m(z)\le C_1z^{\gamma_1}.
\end{align}
In view of the previous arguments on bounded value spaces, we may restrict ourselves to the case $z>\tilde z$ in the following. First, we have
\begin{align*}
h(z)&=\int_{s_0}^{\tilde z}\frac{z-r}{m(r)}\dd r+\int_{\tilde z}^{z}\frac{z-r}{m(r)}\dd r,
\end{align*}
by definition of $h$ \eqref{eq:hdefi}. By similar considerations as above, one easily finds that 
\begin{align*}
\int_{s_0}^{\tilde z}\frac{z-r}{m(r)}\dd r&\le \tilde C z,
\end{align*}
for some $\tilde C>0$. Observe that 
\begin{align}\label{eq:fbelow}
f(z)&\ge C(\sqrt{z}-1),\quad\text{for some }C>0,
\end{align}
which can be verified by elementary calculations, using that $m(z)\le \tilde{C}(z+1)$ as a consequence of assumption \eqref{eq:M}. With \eqref{eq:fbelow}, we again arrive at
\begin{align*}
\int_{s_0}^{\tilde z}\frac{z-r}{m(r)}\dd r&\le C(f(z)^2+1).
\end{align*}
For the second term, we use \eqref{eq:potenz} to obtain:
\begin{align*}
\int_{\tilde z}^{z}\frac{z-r}{m(r)}\dd r&\le \frac1{C_0}\int_{\tilde z}^z\left(zr^{-\gamma_0}-r^{1-\gamma_0}\right)\dd r\\
&=\begin{cases}\frac1{C_0}\left[z(\log z-\log\tilde z-(z-\tilde z))\right]&\text{if }\gamma_0=1,\\ \frac1{C_0}\left[\frac{z}{1-\gamma_0}\left(z^{1-\gamma_0}-\tilde z^{1-\gamma_0}\right)-\frac1{2-\gamma_0}\left(z^{2-\gamma_0}-\tilde z^{2-\gamma_0}\right)\right]&\text{if }\gamma_0\in [0,1).\end{cases}
\end{align*}
All in all, we end up with
\begin{align*}
h(z)\le C(f(z)^2+1)+C'(1+z\log z+z)&\quad\text{if $\gamma_0=1$, and}\\
h(z)\le C(f(z)^2+1)+C'(1+z^{2-\gamma_0})&\quad\text{if $\gamma_0\in [0,1)$.}
\end{align*}
To estimate $f$ from below, we use \eqref{eq:potenz} again:
\begin{align*}
f(z)&=\int_0^{\tilde z}\sqrt{\frac{2}{m(r)}}\dd r+\int_{\tilde z}^z \sqrt{\frac{2}{m(r)}}\dd r\\
&\ge C(\sqrt{\tilde z}-1)+\int_{\tilde z}^z\sqrt{\frac{2}{C_1}}r^{-\frac{\gamma_1}{2}}\dd r\\
&\ge -\tilde C_1+\tilde C_2 z^{1-\frac{\gamma_1}{2}}.
\end{align*}
Consider the case $\gamma_0<1$. If $d\ge 3$, one has by \eqref{eq:MPG} that
\begin{align*}
h(z)&\le C(f(z)^2+1)+C'(1+z^{2-\gamma_0})\\
&\le C(f(z)^2+1)+C'(1+z^{\frac{2d}{d-2}\left(1-\frac{\gamma_1}{2}\right)})\\
&\le  C(f(z)^2+1)+\tilde C(1+f(z)^{\frac{2d}{d-2}}).
\end{align*}
Putting $z=u(x)$ and integrating over $x\in\Omega$, we obtain \eqref{eq:Hup} for some $q\ge 1$ with the Gagliardo-Nirenberg-Sobolev inequality. The cases $d\in\{1,2\}$ and $\gamma_0=1$ can be treated by similar, but easier arguments. 
\end{proof}

\subsection{The minimizing movement scheme}\label{subsec:mms}
The next result shows the well-\break posedness of the scheme \eqref{eq:mms}. Furthermore, the subsequent minimizers $u_\tau^n$ gain in regularity, i.e., not only $f(u_\tau^n)\in H^1(\Omega)$, but even $f(u_\tau^n)\in H^2(\Omega)$ holds.
\begin{proposition}[Properties of the scheme]\label{prop:scheme}
Assume that \eqref{eq:M} and \eqref{eq:MLSC} hold, and if $S=\infty$, let \eqref{eq:MPG} be satisfied. Let an initial condition $u_0\in\X$ with $\calF(u_0)<\infty$ be given. Then, for each $\tau>0$, the scheme \eqref{eq:mms} is well-defined and yields a sequence $(u_\tau^n)_{n\ge 0}$ and a discrete solution $u_\tau$ via \eqref{eq:pwc}. \\ Moreover, the following statements hold:
\begin{enumerate}[(a)]
\item For all $n\in\N$, $s,t\ge 0$, one has:
\begin{align}
\calF(u_\tau^n)&\le \calF(u_\tau^{n-1})\le \calF(u_0)<\infty,\label{eq:enest}\\
\sum_{n=1}^\infty \W_m(u_\tau^n,u_\tau^{n-1})^2&\le 2\tau\calF(u_0),\nonumber\\
\W_m(u_\tau(s),u_\tau(t))&\le \left(2\calF(u_0)\max(|s-t|,\tau)\right)^{1/2}.\label{eq:holdest}
\end{align}
\item There exists a constant $C>0$ such that for all $\tau>0$ and all $n\in\N$, one has:
\begin{align}\label{eq:areg}
\int_\Omega \|\nabla^2 f(u_\tau^n)\|^2\dd x&\le \frac{C}{\tau}\left(\calH(u_\tau^{n-1})-\calH(u_\tau^n)\right).
\end{align}
\item There exists $p>1$ such that for all $T>0$, there exists a constant $C>0$ such that for all $\tau>0$, the following holds:
\begin{align*}
&\|f(u_\tau)\|_{L^\infty([0,T];H^1)}\le C,\\
&\|f(u_\tau)\|_{L^2([0,T];H^2)}\le C,\\
&\|u_\tau\|_{L^\infty([0,T];L^p)}\le C.
\end{align*}
\end{enumerate}
\end{proposition}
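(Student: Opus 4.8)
The plan is to establish the four assertions in sequence; the heart of the matter is the second-order estimate (b), obtained by a flow-interchange argument.

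\emph{Well-posedness and (a).} For each $n$, a minimizer of $u\mapsto\frac1{2\tau}\W_m(u,u_\tau^{n-1})^2+\calF(u)$ over $\X$ exists by the direct method: along a minimizing sequence the values of $\calF$ stay bounded, so $f(u)$ is bounded in $H^1(\Omega)$; together with the lower bound \eqref{eq:fbelow} on $f$ and the mass constraint this yields $L^1$-compactness, hence $\W_m$-convergence of a subsequence. Lower semicontinuity of $\W_m(\cdot,u_\tau^{n-1})^2$ is a metric fact, while lower semicontinuity of $\calF$ along this convergence is exactly what \eqref{eq:MLSC} provides (as in \cite{lisini2012}); thus the limit is a minimizer, and induction produces $(u_\tau^n)_{n\ge0}$. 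Testing minimality of $u_\tau^n$ against the competitor $u=u_\tau^{n-1}$ gives $\frac1{2\tau}\W_m(u_\tau^n,u_\tau^{n-1})^2+\calF(u_\tau^n)\le\calF(u_\tau^{n-1})$, whence the monotonicity of $\calF$ and, upon summation, $\sum_n\W_m(u_\tau^n,u_\tau^{n-1})^2\le2\tau\calF(u_0)$. The H\"older estimate \eqref{eq:holdest} then follows from the triangle inequality and the Cauchy--Schwarz inequality applied to the one-step distances separating the times $s$ and $t$, whose squares sum to at most $2\tau\calF(u_0)$.

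\emph{The main step (b).} Here I would use the flow-interchange technique with the heat flow $\flow{s}{\calH}$, i.e.\ the $0$-contractive $\W_m$-gradient flow of $\calH$ (recall this flow is $\partial_s\rho=\Delta\rho$ and that $\calF$ is exactly the dissipation of $\calH$ along it). Inserting $\flow{s}{\calH}(u_\tau^n)$ as a competitor---admissible since the heat flow preserves the mass and the constraint $0\le\cdot\le S$---in the minimality of $u_\tau^n$, rearranging, using that $\calH$ is $0$-convex so that its gradient flow obeys the evolution variational inequality, and integrating in $s$, one obtains after dividing by $s$ and letting $s\searrow0$:
\begin{align*}
-\frac{\dd}{\dd s}\Big|_{s=0^+}\calF(\flow{s}{\calH}(u_\tau^n))\le\frac1\tau\big(\calH(u_\tau^{n-1})-\calH(u_\tau^n)\big).
\end{align*}
It then remains to bound the dissipation on the left from below by $c\int_\Omega\|\nabla^2 f(u_\tau^n)\|^2\dd x$. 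Differentiating $\calF(\rho_s)=\frac12\int|\nabla f(\rho_s)|^2\dd x$ along $\partial_s\rho=\Delta\rho$ and integrating by parts produces $\int(\Delta f(\rho))^2\dd x$ and a cross term; on the convex $\Omega$ with the Neumann condition the Reilly/Bochner identity gives $\int(\Delta f(\rho))^2\ge\int\|\nabla^2 f(\rho)\|^2$ (the boundary term having the right sign by convexity of $\Omega$), and a systematic integration by parts in which the concavity $m''\le0$ supplies the crucial sign and the \eqref{eq:MLSC} bounds control the coefficients absorbs the cross term and yields a constant $c>0$. The genuine difficulty is the rigor, since a priori only $f(u_\tau^n)\in H^1(\Omega)$: I would use that the heat flow regularizes instantaneously, so the identity holds for $s>0$, and transfer the bound to $s=0$ by lower semicontinuity of the second-order functional, thereby obtaining simultaneously $f(u_\tau^n)\in H^2(\Omega)$ and \eqref{eq:areg}.

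\emph{Part (c).} The three uniform bounds follow from (a) and (b). The estimate $\calF(u_\tau^n)\le\calF(u_0)$ controls $\|\nabla f(u_\tau)\|_{L^2}$ uniformly, and with the mass constraint and Poincar\'e's inequality this gives the $L^\infty([0,T];H^1)$ bound on $f(u_\tau)$; the bound $u\le C(f(u)^2+1)$ from \eqref{eq:fbelow} together with the Sobolev embedding of $H^1$ then yields the $L^\infty([0,T];L^p)$ bound for some $p>1$ (and trivially $u\le S$ when $S<\infty$). Finally, multiplying \eqref{eq:areg} by $\tau$ and summing over $n\le\lceil T/\tau\rceil$ telescopes the right-hand side to $C(\calH(u_0)-\calH(u_\tau^{\lceil T/\tau\rceil}))\le C\calH(u_0)$, which is finite by Lemma \ref{lem:HFest}; combined with the lower-order $H^1$ control this delivers the $L^2([0,T];H^2)$ bound uniformly in $\tau$.
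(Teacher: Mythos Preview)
Your proposal is correct and follows essentially the same route as the paper: direct method for well-posedness, the standard minimizing-movement estimates for (a), flow interchange with the heat entropy $\calH$ for (b), and telescoping plus Lemma~\ref{lem:HFest} for (c).

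The one place where your write-up diverges in emphasis from the paper is the core computation in (b). You describe an abstract integration-by-parts scheme in which ``concavity $m''\le0$ supplies the crucial sign and the \eqref{eq:MLSC} bounds control the coefficients.'' The paper instead reduces the entire dissipation calculation to a single scalar inequality, citing \cite[Prop.~7.3]{lmz2015}: it suffices that
\[
\frac{f'''(z)f'(z)}{f''(z)^2}\ge 3\qquad\text{for all }z\in(0,S),
\]
and a direct computation from $f'=\sqrt{2/m}$ shows the left-hand side equals $3-2m(z)m''(z)/m'(z)^2$, so the inequality follows from $m''\le0$ alone. In particular, \eqref{eq:MLSC} is \emph{not} used to control coefficients in this step; the sign comes purely from condition \eqref{eq:M}. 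This is worth noting because it explains why the estimate \eqref{eq:areg} survives uniformly in the later approximation argument for non-Lipschitz mobilities. Your version would still work, but the paper's reduction to a pointwise algebraic condition on $f$ is both cleaner and makes the independence from \eqref{eq:MLSC} transparent.
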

\begin{proof}
A straightforward application of the direct method from the calculus of variations and Poincar\'{e}'s inequality shows that, given that $\calF(u_\tau^{n-1})<\infty$, i.e., $f(u_\tau^{n-1})\in H^1(\Omega)$, the \emph{Yosida penalization} $u\mapsto\frac1{2\tau}\W_m(u,u_\tau^{n-1})^2+\calF(u)$ admits a minimizer $u_\tau^n$ on $\X$ with $\calF(u_\tau^n)<\infty$. 

The properties in (a) are a direct consequence of the scheme \eqref{eq:mms}\&\eqref{eq:pwc} and are well-known (see, for instance, \cite{savare2008}).

To prove the additional regularity property \eqref{eq:areg}, we apply the \emph{flow interchange} technique from \cite{matthes2009}, using the heat entropy $\calH$ as $0$-convex auxiliary functional. The relevant symbolic calculations are already contained in the proof of \cite[Prop. 7.3]{lmz2015} where a functional of similar form has been studied: here, it is sufficient to show that
\begin{align*}
\frac{f'''(z)f'(z)}{f''(z)^2}&\ge 3\qquad\text{for all }z\in (0,S).
\end{align*}
Indeed, using the definition of $f$, one sees
\begin{align*}
\frac{f'''(z)f'(z)}{f''(z)^2}&=3-2m(z)\frac{m''(z)}{m'(z)^2},
\end{align*}
and the claim follows by assumption \eqref{eq:M} on the mobility $m$.

The first estimate in (c) is an immediate consequence of the energy estimate \eqref{eq:enest} and Poincar\'{e}'s inequality. The last one is nontrivial only for $d\ge 3$ and $S=\infty$. Using the first estimate and the Gagliardo-Nirenberg-Sobolev inequality, the above estimate with $p=\frac{d}{d-2}$ follows from the inequality \eqref{eq:fbelow}.

For the second statement in (c), integrate \eqref{eq:areg} over time and simplify the telescopic sum to see
\begin{align*}
\int_0^T\int_\Omega \|\nabla^2 f(u_\tau)\|^2\dd x\dd t&\le C\left(\calH(u_0)-\calH(u_\tau^n)\right).
\end{align*}
With \eqref{eq:Hup} and the energy estimate \eqref{eq:enest}, we have for some $C'>0$:
\begin{align*}
\int_0^T\int_\Omega \|\nabla^2 f(u_\tau)\|^2\dd x\dd t&\le C'\left(\calF(u_0)^q+\calF(u_\tau^n)^q+1\right)\le C'\left(2\calF(u_0)^q+1\right),
\end{align*}
which is a finite constant.
\end{proof}
With the techniques from \cite{lisini2012,lmz2015}, one deduces an approximate weak formulation of equation \eqref{eq:pde} satisfied by the discrete curve $u_\tau$. The cornerstone of the derivation again is the \emph{flow interchange lemma} \cite[Thm. 3.2]{matthes2009}. This time, the auxiliary functional is the \emph{regularized potential energy}
\begin{align*}
\mathcal{V}(u):=\beta\calH(u)+\int_\Omega u\phi\dd x,
\end{align*}
where $\beta>0$ and $\phi\in C^\infty(\overline{\Omega})$ with $\partial_\nu\phi=0$ on $\partial\Omega$ are given: it is known \cite{lisini2012,zm2014} under the assumptions \eqref{eq:M} and \eqref{eq:MLSC} on $m$ that $\mathcal{V}$ induces a $\kappa_\beta$-contractive gradient flow on $(\X,\W_m)$, for $\kappa_\beta=-\frac{C}{\beta}$ with a fixed constant $C>0$. For the sake of brevity, we skip the calculations here and directly state the result:
\begin{lemma}[Discrete weak formulation]\label{lem:dweak}
Let $\tau>0$ and define the discrete solution $u_\tau$ by \eqref{eq:mms}\&\eqref{eq:pwc}. Then, for all $\beta>0$, all $\phi\in C^\infty(\overline{\Omega})$ with $\partial_\nu\phi=0$ on $\partial\Omega$ and all $\eta\in C^\infty_c((0,\infty))\cap C([0,\infty))$, the following \emph{discrete weak formulation} holds:
\begin{align}
\begin{split}
&\qquad\|\eta\|_{C^0}\kappa_\beta\tau\calF(u_0)+\beta\int_0^\infty\int_\Omega \frac{|\eta|_\tau(t)-|\eta|_\tau(t+\tau)}{\tau}h(u_\tau)\dd x\dd t\\
&\le \int_0^\infty\int_\Omega \frac{\eta_\tau(t)-\eta_\tau(t+\tau)}{\tau}u_\tau\phi\dd x\dd t\\&\quad+\int_0^\infty\int_\Omega \eta_\tau f'(u_\tau)\Delta f(u_\tau)\left[\nabla m(u_\tau)\cdot\nabla\phi+m(u_\tau)\Delta\phi\right]\dd x \dd t\\
&\le -\|\eta\|_{C^0}\kappa_\beta\tau\calF(u_0)-\beta\int_0^\infty\int_\Omega \frac{|\eta|_\tau(t)-|\eta|_\tau(t+\tau)}{\tau}h(u_\tau)\dd x\dd t,
\end{split}
\label{eq:dweak}
\end{align}
where $\kappa_\beta<0$ is as above. For $a:[0,\infty)\to \R$, we denote $a_\tau(s)=a(\lceil\frac{s}{\tau}\rceil\tau)$.
\end{lemma}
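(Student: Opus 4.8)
The plan is to run the \emph{flow interchange} argument of \cite[Thm.~3.2]{matthes2009} with the regularized potential energy in both sign variants $\mathcal{V}_{\pm}:=\beta\calH\pm\int_\Omega u\phi\dd x$ as auxiliary functional, and then to sum the resulting one-step estimates against the time weight $\eta$. Under \eqref{eq:M} and \eqref{eq:MLSC} each $\mathcal{V}_{\pm}$ generates a $\kappa_\beta$-contractive gradient flow $\flow{s}{\pm}$ on $(\X,\W_m)$ (see \cite{lisini2012,zm2014}), so testing the minimality of $u_\tau^n$ against the competitor $s\mapsto\flow{s}{\pm}(u_\tau^n)$ and letting $s\searrow0$ in the associated evolution variational inequality yields, for every $n$,
\begin{align*}
\tau\,\mathcal{D}^{\mathcal{V}_{\pm}}\calF(u_\tau^n)+\tfrac{\kappa_\beta}{2}\W_m(u_\tau^n,u_\tau^{n-1})^2\le \mathcal{V}_{\pm}(u_\tau^{n-1})-\mathcal{V}_{\pm}(u_\tau^n),
\end{align*}
where $\mathcal{D}^{\mathcal{V}_{\pm}}\calF$ is the dissipation of $\calF$ along $\flow{s}{\pm}$.

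The next step is to compute this dissipation explicitly. Since $\flow{s}{\pm}$ solves $\partial_s v=\beta\Delta v\pm\dv(m(v)\nabla\phi)$, differentiating $\calF(v)=\frac12\int_\Omega|\nabla f(v)|^2\dd x$ and integrating by parts (using the Neumann condition and the $H^2$-regularity of $f(u_\tau^n)$ from Proposition \ref{prop:scheme}(b)) gives
\begin{align*}
\mathcal{D}^{\mathcal{V}_{\pm}}\calF(u_\tau^n)=\beta\int_\Omega f'(u_\tau^n)\Delta f(u_\tau^n)\Delta u_\tau^n\dd x\pm\int_\Omega f'(u_\tau^n)\Delta f(u_\tau^n)\big[\nabla m(u_\tau^n)\cdot\nabla\phi+m(u_\tau^n)\Delta\phi\big]\dd x.
\end{align*}
Exactly as in the proof of Proposition \ref{prop:scheme}(b), the pure-diffusion term is nonnegative because $f'''(z)f'(z)/f''(z)^2\ge3$, i.e.\ because $m$ is concave. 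Discarding it in both sign choices and abbreviating the second ($\phi$-linear) integral by $C_n$, with $W_n:=\W_m(u_\tau^n,u_\tau^{n-1})^2$ and $b_n:=\int_\Omega u_\tau^n\phi\dd x$, leaves the two one-step inequalities $\pm\tau C_n+\tfrac{\kappa_\beta}{2}W_n\le\beta(\calH(u_\tau^{n-1})-\calH(u_\tau^n))\pm(b_{n-1}-b_n)$. Combining them produces the symmetric bound
\begin{align*}
\big|\tau C_n+(b_n-b_{n-1})\big|\le \beta\big(\calH(u_\tau^{n-1})-\calH(u_\tau^n)\big)+\tfrac{|\kappa_\beta|}{2}W_n=:M_n.
\end{align*}

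Finally I would assemble the space-time statement. Writing $\eta_n:=\eta(n\tau)$ and unfolding the definitions of $u_\tau$, $\eta_\tau$ and $|\eta|_\tau$, the middle expression of \eqref{eq:dweak} equals $\sum_{n\ge1}(\eta_n-\eta_{n+1})b_n+\sum_{n\ge1}\eta_n\tau C_n$, which a discrete summation by parts rewrites as $\eta_1 b_0+\sum_{n\ge1}\eta_n\big(\tau C_n+(b_n-b_{n-1})\big)$; the boundary contribution $\eta_1 b_0=\eta(\tau)\int_\Omega u_0\phi\dd x$ vanishes because $\eta\in C_c^\infty((0,\infty))$ is supported away from the origin, so $\eta(\tau)=0$ once $\tau$ lies below $\operatorname{supp}\eta$ (which is all that is needed for the continuous limit $\tau\searrow0$). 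Estimating $\big|\sum_n\eta_n(\tau C_n+b_n-b_{n-1})\big|\le\sum_n|\eta_n|M_n$ and splitting $M_n$ into its two parts gives, on the one hand, $\beta\sum_n|\eta_n|(\calH(u_\tau^{n-1})-\calH(u_\tau^n))$, which a further summation by parts (again discarding the vanishing $\eta(\tau)$-term) turns into the \emph{negative} of the $h$-integral $\beta\int_0^\infty\int_\Omega\frac{|\eta|_\tau(t)-|\eta|_\tau(t+\tau)}{\tau}h(u_\tau)\dd x\dd t$ appearing in \eqref{eq:dweak}; and, on the other hand, $\tfrac{|\kappa_\beta|}{2}\sum_n|\eta_n|W_n\le\|\eta\|_{C^0}\tfrac{|\kappa_\beta|}{2}\sum_n W_n\le\|\eta\|_{C^0}|\kappa_\beta|\tau\calF(u_0)$ by the a priori bound $\sum_n W_n\le2\tau\calF(u_0)$ from Proposition \ref{prop:scheme}(a). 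Taking the two signs of the outer absolute value then yields the upper and the lower bound of \eqref{eq:dweak} at once (note $-\kappa_\beta=|\kappa_\beta|$, and that the $h$-integral and the $\W_m$-term are independent of $\phi$).

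I expect the genuine difficulty to lie entirely in the dissipation computation of the second step—justifying the differentiation of $\calF$ along $\flow{s}{\pm}$, the integrations by parts, and in particular the nonnegativity of the diffusion term—since this rests on the parabolic regularization of the auxiliary flow together with the concavity of $m$. The remaining bookkeeping is routine: the factor $|\eta|$ in front of the $h$-term is forced precisely by the one-sidedness of flow interchange (which is why both $\mathcal{V}_+$ and $\mathcal{V}_-$ are needed), and the summation-by-parts boundary terms vanish through the support condition on $\eta$. As the key computation is already carried out in \cite{matthes2009,lisini2012,lmz2015}, the proof essentially reduces to invoking it and performing the discrete summation above.
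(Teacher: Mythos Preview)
Your proposal is correct and follows essentially the same route as the paper: the paper also derives the lemma via the flow interchange lemma \cite[Thm.~3.2]{matthes2009} with the regularized potential energy $\mathcal{V}=\beta\calH+\int_\Omega u\phi\dd x$ as auxiliary functional (your use of both $\mathcal{V}_\pm$ is equivalent to replacing $\phi$ by $-\phi$), and explicitly defers the dissipation computation and the summation to \cite{lisini2012,lmz2015}. The one point to flag is that the lemma is stated for \emph{all} $\tau>0$, whereas your summation-by-parts argument discards the boundary contributions $\eta(\tau)b_0$ and $\beta|\eta(\tau)|\calH(u_0)$ only once $\tau$ is below $\operatorname{supp}\eta$; since the paper uses the lemma solely in the limit $\tau\searrow 0$ this is harmless, but strictly speaking you have proved \eqref{eq:dweak} for all sufficiently small $\tau$ rather than for every $\tau>0$.
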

Note that since $f'(z)=\sqrt{\frac{2}{m(z)}}$ holds, one can also rewrite the term involving $f$ in \eqref{eq:dweak} as
\begin{align*}
&\quad\int_0^\infty\int_\Omega \eta_\tau f'(u_\tau)\Delta f(u_\tau)\left[\nabla m(u_\tau)\cdot\nabla\phi+m(u_\tau)\Delta\phi\right]\dd x \dd t\\
&=\int_0^\infty\int_\Omega \eta_\tau \sqrt{2}\Delta f(u_\tau)\left[2\nabla \sqrt{m(u_\tau)}\cdot\nabla\phi+\sqrt{m(u_\tau)}\Delta\phi\right]\dd x \dd t.
\end{align*}
\subsection{Passage to the continuous time limit}\label{ssec:ctl}
The remainder of this section is concerned with the passage to the continuous time limit $\tau\searrow 0$. In particular, we show that the passage to the limit inside \eqref{eq:dweak} yields the time-continuous weak formulation of \eqref{eq:pde}, completing the proof of Theorem \ref{thm:existLSC}. In order to obtain convergence in a strong sense, we make use of the following extension of the Aubin-Lions compactness lemma.
\begin{theorem}[Extension of the Aubin-Lions lemma {\cite[Thm. 2]{rossi2003}}]\label{thm:ex_aub}
Let $\ban$ be a Banach space and $\mathcal{A}:\,\ban\to[0,\infty]$ be lower semicontinuous and have relatively compact sublevels in $\ban$. Let furthermore $\dg:\,\ban\times \ban\to[0,\infty]$ be lower semicontinuous and such that $\dg(u,\tilde u)=0$ for $u,\tilde u\in \operatorname{Dom}(\mathcal{A})$ implies $u=\tilde u$.

Let $(U_k)_{k\in\N}$ be a sequence of measurable functions $U_k:\,(0,T)\to \ban$. If
\begin{align*}
&\sup_{k\in\N}\int_0^T\mathcal{A}(U_k(t))\dd t<\infty,\\
&\lim_{h\searrow 0}\sup_{k\in\N}\int_0^{T-h}\dg(U_k(t+h),U_k(t))\dd t=0,
\end{align*}
then there exists a subsequence that converges in measure w.r.t. $t\in(0,T)$ to a limit $U:\,(0,T)\to \ban$.
\end{theorem}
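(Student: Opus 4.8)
\section*{Proof proposal}

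The plan is to show that $(U_k)$ is relatively compact for the topology of convergence in measure on $(0,T)$, which is metrizable --- for instance by $\mathbf{d}(U,V):=\int_0^T\min(1,\|U(t)-V(t)\|_\ban)\dd t$, a complete metric on $L^0((0,T);\ban)$ --- so that it suffices to extract a subsequence that is Cauchy for $\mathbf{d}$. The conceptual heart of the argument, and the step that couples the two hypotheses, is a \emph{quantitative separation estimate on sublevels} of $\mathcal{A}$: I claim that for every $R>0$ and $\ep>0$ there is $\delta=\delta(R,\ep)>0$ such that $\mathcal{A}(u)\le R$, $\mathcal{A}(v)\le R$ and $\dg(u,v)\le\delta$ force $\|u-v\|_\ban\le\ep$. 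This I would prove by contradiction: if it failed, there would exist $u_n,v_n$ with $\mathcal{A}(u_n),\mathcal{A}(v_n)\le R$, $\dg(u_n,v_n)\to0$ and $\|u_n-v_n\|_\ban>\ep$; since $\{\mathcal{A}\le R\}$ is relatively compact in $\ban$, one extracts $u_n\to u$ and $v_n\to v$ in $\ban$ with $\|u-v\|_\ban\ge\ep$, while lower semicontinuity of $\mathcal{A}$ gives $u,v\in\operatorname{Dom}(\mathcal{A})$ and lower semicontinuity of $\dg$ gives $\dg(u,v)\le\liminf_n\dg(u_n,v_n)=0$, whence $u=v$ by hypothesis --- a contradiction.

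Next I would convert the two integral bounds into control on the measure of ``bad'' time sets. Writing $M:=\sup_k\int_0^T\mathcal{A}(U_k(t))\dd t<\infty$, Chebyshev's inequality yields $|\{t:\mathcal{A}(U_k(t))>R\}|\le M/R$ uniformly in $k$, so up to a time set of measure $\le 2M/R$ both $U_k(t)$ and $U_k(t+h)$ lie in the relatively compact sublevel $\{\mathcal{A}\le R\}$. The second hypothesis gives, for any $\delta,\sigma>0$, a threshold $h_0>0$ with $\int_0^{T-h}\dg(U_k(t+h),U_k(t))\dd t<\delta\sigma$ for $0<h\le h_0$ and all $k$, hence $|\{t:\dg(U_k(t+h),U_k(t))>\delta\}|<\sigma$. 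Choosing $R$ large, then $\delta=\delta(R,\ep)$ from the separation estimate, then $h_0$ accordingly, the separation estimate upgrades these facts into a uniform \emph{translation equicontinuity in measure} in the norm of $\ban$:
\[
\lim_{h\searrow0}\ \sup_{k\in\N}\ \bigl|\{t\in(0,T-h):\|U_k(t+h)-U_k(t)\|_\ban>\ep\}\bigr|=0\qquad(\forall\,\ep>0).
\]

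It then remains to deduce relative compactness in measure from (i) tightness --- for every $\sigma>0$ a compact $K\subset\ban$ with $\sup_k|\{t:U_k(t)\notin K\}|\le\sigma$, which follows from the Chebyshev bound by taking $K=\overline{\{\mathcal{A}\le R\}}$ with $R=M/\sigma$ --- and (ii) the translation equicontinuity just established. This is a Banach-space-valued Fréchet--Kolmogorov (Riesz--Tamarkin) criterion for convergence in measure, and proving it is the main obstacle. I would argue by time mollification: fixing $\sigma$ and the corresponding $K$, define the truncation $\tilde U_k(t):=U_k(t)$ where $U_k(t)\in K$ and $\tilde U_k(t):=x_0$ for a fixed $x_0\in K$ otherwise, so that $\tilde U_k$ is measurable, $K$-valued, Bochner integrable, and agrees with $U_k$ off a time set of measure $\le\sigma$. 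Its Steklov average $\tilde U_k^h(t):=\tfrac1h\int_t^{t+h}\tilde U_k(s)\dd s$ is continuous in $t$ and takes values in the \emph{compact} set $\overline{\mathrm{co}}\,K$ (compactness of the closed convex hull of a compact set being Mazur's theorem). For fixed $h,\sigma$ the family $(\tilde U_k^h)_k$ is equicontinuous in $t$ and pointwise relatively compact, so Arzel\`a--Ascoli produces a subsequence converging uniformly in $t$; meanwhile (ii) controls $\mathbf{d}(\tilde U_k^h,\tilde U_k)$ uniformly in $k$ as $h\searrow0$ and the truncation contributes at most $\sigma$ to $\mathbf{d}(\tilde U_k,U_k)$. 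A diagonal extraction over $h\searrow0$ and $\sigma\searrow0$ then yields a subsequence Cauchy for $\mathbf{d}$, hence convergent in measure to a limit $U:(0,T)\to\ban$, completing the proof.
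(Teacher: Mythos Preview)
The paper does not prove this theorem; it is quoted verbatim from Rossi and Savar\'e (2003) and used as a black box, so there is no ``paper's own proof'' to compare against. Your sketch is essentially the argument of that reference: the quantitative separation estimate on sublevels of $\mathcal{A}$ (proved by compactness and lower semicontinuity exactly as you do), the Chebyshev tightness bound, the upgrade to uniform translation equicontinuity in measure, and the Fr\'echet--Kolmogorov step via truncation to a compact set, Steklov averaging into $\overline{\mathrm{co}}\,K$, and Arzel\`a--Ascoli.

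One technical point worth tightening: the step ``(ii) controls $\mathbf{d}(\tilde U_k^h,\tilde U_k)$ uniformly in $k$ as $h\searrow 0$'' does not follow from Jensen's inequality, since $\min(1,\cdot)$ is concave and the inequality goes the wrong way. The clean fix is to exploit that $\tilde U_k$ takes values in the bounded set $K$: then
\[
\int_0^{T-h}\|\tilde U_k^h(t)-\tilde U_k(t)\|_{\ban}\dd t
\le \frac1h\int_0^h\int_0^{T-h}\|\tilde U_k(t+r)-\tilde U_k(t)\|_{\ban}\dd t\dd r,
\]
and for each $r\le h$ the inner integral is at most $\ep T+\mathrm{diam}(K)\cdot\bigl|\{t:\|\tilde U_k(t+r)-\tilde U_k(t)\|_{\ban}>\ep\}\bigr|$, which is small uniformly in $k$ by your translation equicontinuity (transferred from $U_k$ to $\tilde U_k$ at the cost of an extra $2\sigma$ in measure). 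With this in hand the diagonal argument yields total boundedness of $\{U_k\}$ in the complete metric $\mathbf{d}$, hence a convergent subsequence.
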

\noindent\textit{Proof of Theorem \ref{thm:existLSC}:}
Let a vanishing sequence $(\tau_k)_{k\in\N}$ of step sizes be given and define the corresponding sequence of discrete solutions $(u_{\tau_k})$ via \eqref{eq:mms}\&\eqref{eq:pwc}. Thanks to the \emph{a priori} estimates from Proposition \ref{prop:scheme}(a)--(c), there exists a map \break $u\in C^{1/2}([0,T];(\X,\W_m))\cap L^\infty([0,T];L^p(\Omega))$ such that $u_{\tau_k}$ converges (on a non-relabelled subsequence) to $u$ both weakly in $L^p([0,T];L^p(\Omega))$ (as a consequence of the Banach-Alaoglu theorem) as well as in $(\X,\W_m)$ pointwise w.r.t. $t\in[0,T]$ (as a consequence of the topological properties of the distance $\W_m$ and a refined version of the Arzel\`{a}-Ascoli theorem \cite[Prop. 3.3.1]{savare2008}). 

Furthermore, Proposition \ref{prop:scheme}(c) and the Banach-Alaoglu theorem yield the existence of $v\in L^2([0,T];H^2(\Omega))$ such that $f(u_{\tau_k})$ converges weakly to $v$ in \break $L^2([0,T];H^2(\Omega))$, possibly extracting another subsequence.  We now prove that $f(u_{\tau_k})\to f(u)$ strongly in $L^2([0,T];L^2(\Omega))$ for $u=f^{-1}\circ v$. By a standard interpolation inequality, the desired strong convergence of $f(u_{\tau_k})\to f(u)$ in $L^2([0,T];H^1(\Omega))$ then follows. Strong convergence of $u_{\tau_k}$ to $u$ in $L^p([0,T];L^p(\Omega))$ (on a subsequence) is achieved by essentially the same technique as in \cite{zm2014,lmz2015} applying Theorem \ref{thm:ex_aub} for the admissible choices $\ban:=L^p(\Omega)$ (with $p>1$ from Proposition \ref{prop:scheme}(c)), 
\begin{align*}
\mathcal{A}:\ban\to[0,\infty]\text{ with }\mathcal{A}(\rho):=\begin{cases}\|\rho\|_{H^2}^2&\text{if }\rho\in H^2(\Omega),\\ +\infty&\text{otherwise,}\end{cases}
\end{align*}
and
\begin{align*}
\dg:\ban\times\ban\to[0,\infty]\text{ with }\dg(\rho,\tilde\rho):=\begin{cases}\W_m(\rho,\tilde\rho)&\text{if }\rho,\tilde\rho\in\X,\\ +\infty&\text{otherwise.}\end{cases}
\end{align*}
We refer to \cite{lmz2015} for the details. A rather straightforward application of Vitali's convergence theorem subsequently yields the strong convergence of $f(u_{\tau_k})\to f(u)$ in $L^2([0,T];L^2(\Omega))$. 

Obviously, this convergence property shows in combination with \eqref{eq:enest} the claimed convergence and monotonicity of the energy $\calF$ in Theorem \ref{thm:existLSC}(c). To prove that the limit $u$ indeed is a weak solution to \eqref{eq:pde}, we verify that for all $\phi\in C^\infty(\overline{\Omega})$ with $\partial_\nu\phi=0$ on $\partial\Omega$ and all $\eta\in C^\infty_c((0,\infty))\cap C([0,\infty))$, the following \emph{continuous weak formulation} holds:
\begin{align}\label{eq:weak}
0=\int_0^\infty\int_\Omega\left( -\partial_t\eta\phi u+\eta f'(u)\Delta f(u)\left[\nabla m(u)\cdot\nabla\phi+m(u)\Delta\phi\right]\right)\dd x \dd t.
\end{align}
We set $\beta_k:=\sqrt{\tau_k}$ in the discrete weak formulation \eqref{eq:dweak}. Then, as $(\calH(u_\tau^n))_{n\in\N}$ is bounded (recall \eqref{eq:Hup}\&\eqref{eq:enest}), it is immediate that one has
\begin{align*}
&\quad\int_0^\infty\int_\Omega\partial_t\eta\phi u\dd x\dd t\\
&=\lim_{k\to\infty}\int_0^\infty\int_\Omega \eta_{\tau_k} \sqrt{2}\Delta f(u_{\tau_k})\left[2\nabla \sqrt{m(u_{\tau_k})}\cdot\nabla\phi+\sqrt{m(u_{\tau_k})}\Delta\phi\right]\dd x \dd t.
\end{align*}
Since $\eta_{\tau_k}\to\eta$ uniformly and $\Delta f(u_{\tau_k})\rightharpoonup\Delta f(u)$ weakly in $L^2([0,T];L^2(\Omega))$, it suffices to prove that $\nabla \sqrt{m(u_{\tau_k})}\to\nabla \sqrt{m(u)}$ strongly in $L^2([0,T];L^2(\Omega))$, in view of Poincar\'{e}'s inequality. Using the definition of $f$ and writing $g:=f^{-1}$ (which exists by strict monotonicity, recall that $m(z)>0$ for $z\in (0,S)$), this is equivalent to proving that $\nabla g'(v_{\tau_k})$ converges to $\nabla g'(v)$ strongly in $L^2([0,T];L^2(\Omega))$ for $v:=f(u)$ and $v_{\tau_k}:=f(u_{\tau_k})$, since one has
\begin{align}\label{eq:dg}
g'(w)=\frac1{f'(g(w))}=\sqrt{\frac12 m(g(w))}\quad\text{for all }w\in (0,f(S)).
\end{align}
Using the chain rule and \eqref{eq:dg}, we obtain
\begin{align*}
g''(w)=\frac14 m'(g(w)).
\end{align*}
Hence, for all $k\in\N$:
\begin{align}\label{eq:nabdg}
\nabla g'(v_{\tau_k})=\frac14 m'(g(v_{\tau_k}))\nabla v_{\tau_k}.
\end{align}
Since $m$ satisfies \eqref{eq:M} and \eqref{eq:MLSC}, $m'\circ g$ is continuous and bounded. Due to the strong convergence of $v_{\tau_k}$ to $v$ in $L^2([0,T];H^1(\Omega))$ and Vitali's theorem, $(\nabla v_{\tau_k})_{k\in\N}$ is uniformly integrable in $L^2([0,T]\times\Omega)$. Without restruction, we may assume that $v_{\tau_k}$ to $v$ almost everywhere on $[0,T]\times\Omega$. Hence, another application of Vitali's theorem yields the asserted strong convergence of $\frac14 m'(g(v_{\tau_k}))\nabla v_{\tau_k}$ to $\frac14 m'(g(v))\nabla v$ in $L^2([0,T]\times\Omega)$, on a suitable subsequence.
\hfill\qed

\section{Non-Lipschitz mobility functions}\label{sec:nonlsc}
In this section, we consider mobility functions which do not satisfy \eqref{eq:MLSC}, but can be approximated in a suitable way by LSC mobilities, see \eqref{eq:app1}\&\eqref{eq:app2}. Our strategy of proof for Theorem \ref{thm:existnonLSC} is as follows: first, we demonstrate that the \emph{a priori} estimates from Proposition \ref{prop:scheme} are \emph{uniform} w.r.t. the approximation parameter $\delta>0$ when considering a family $(u_\delta)_{\delta\in(0,\bar\delta)}$ of weak solutions to \eqref{eq:pde} with initial condition $u_0$ (independent of $\delta$) for $m_\delta$ in place of $m$ (in the sense of Theorem \ref{thm:existLSC}). This will allow us to pass to the limit $\delta\searrow 0$ in the weak formulation \eqref{eq:weak} of \eqref{eq:pde} for $m_\delta$ to obtain the sought-for weak formulation of \eqref{eq:pde} for $m$.

\subsection{A priori estimates}\label{ssec:approx}
This section is devoted to the derivation of the necessary \emph{a priori} estimates on the family $(u_\delta)_{\delta\in(0,\bar\delta)}$ introduced above. For definiteness, notice that 
at each fixed $u\in\X$ and for all $0<\delta_0\le\delta_1\le\bar\delta$, one has
\begin{align*}
\calF_{\delta_0}(u)&=\int_\Omega \eins_{\{y\in\Omega:\,u(y)\in (0,S)\}}(x)f_{\delta_0}'(u(x))^2|\nabla u(x)|^2\dd x\\
&=\int_\Omega \eins_{\{y\in\Omega:\,u(y)\in (0,S)\}}(x)\frac{2}{m_{\delta_0}(u(x))^2}|\nabla u(x)|^2\dd x\\
&\le \int_\Omega \eins_{\{y\in\Omega:\,u(y)\in (0,S)\}}(x)\frac{2}{m_{\delta_1}(u(x))^2}|\nabla u(x)|^2\dd x=\calF_{\delta_1}(u),
\end{align*}
since $m(z)\ge m_{\delta_0}(z)\ge m_{\delta_1}(z)$ for all $z\in (0,S)$ (see \cite{lisini2012}). Hence,
\begin{align*}
\calF_\delta(u_0)\le\calF_{\bar\delta}(u_0),
\end{align*}
so the condition $\calF_{\bar\delta}(u_0)<\infty$ and Theorem \ref{thm:existLSC} provide the existence of $(u_\delta)_{\delta\in(0,\bar\delta)}$.
\begin{lemma}[A priori estimates]\label{lem:apri2}
Let a sufficiently small $\bar\delta>0$ be given and let $(u_\delta)_{\delta\in(0,\bar\delta)}$ be a family of weak solutions to \eqref{eq:pde} for $m_\delta$ in place of $m$ with initial condition $u_0$ (in the sense of Theorem \ref{thm:existLSC}). Then, for all $\delta\in(0,\bar\delta)$ and all $T>0$:
\begin{enumerate}[(a)]
\item $\calF_\delta(u_\delta(t))\le \calF_{\bar\delta}(u_0)$ for all $t\in[0,T]$,
\item $\W_m(u_\delta(s),u_\delta(t))\le \sqrt{2\calF_{\bar\delta}(u_0)|s-t|}$ for all $s,t\in [0,T]$,
\item $\|f_\delta(u_\delta)\|_{L^\infty([0,T];H^1)}\le C$,
\item $\|f_\delta(u_\delta)\|_{L^2([0,T];H^2)}\le C$,
\item $\|u_\delta\|_{L^\infty([0,T];L^p)}\le C$,
\end{enumerate}
for some $p>1$ and $C>0$ independent of $\delta$.
\end{lemma}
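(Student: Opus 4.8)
The plan is to transfer each of the five estimates of Proposition~\ref{prop:scheme} (applied to $m_\delta$) into a bound that is \emph{uniform} in $\delta$, by controlling all the $\delta$-dependent constants in terms of the single quantity $\calF_{\bar\delta}(u_0)$. The organizing principle is the monotonicity $m(z)\ge m_{\delta_0}(z)\ge m_{\delta_1}(z)$ for $0<\delta_0\le\delta_1\le\bar\delta$ recalled just before the lemma, which yields $\calF_\delta(u)\le\calF_{\bar\delta}(u)$ for every $u\in\X$ and, dually, $\W_{m_\delta}\ge\W_m$ as metrics on $\X$ (a larger mobility makes transport cheaper). Since each $u_\delta$ is a weak solution in the sense of Theorem~\ref{thm:existLSC}, it inherits from Theorem~\ref{thm:existLSC}(c) that $t\mapsto\calF_\delta(u_\delta(t))$ is a.e.\ nonincreasing, and from Theorem~\ref{thm:existLSC}(a) the H\"older bound with constant $\calF_\delta(u_\delta(0))=\calF_\delta(u_0)\le\calF_{\bar\delta}(u_0)$.

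For (a): by monotonicity in time of $\calF_\delta(u_\delta(\cdot))$ and $\calF_\delta(u_0)\le\calF_{\bar\delta}(u_0)$, one gets $\calF_\delta(u_\delta(t))\le\calF_{\bar\delta}(u_0)$ for all $t$. For (b): the H\"older estimate from Theorem~\ref{thm:existLSC} gives $\W_{m_\delta}(u_\delta(s),u_\delta(t))\le\sqrt{2\calF_\delta(u_0)|s-t|}\le\sqrt{2\calF_{\bar\delta}(u_0)|s-t|}$, and since $\W_m\le\W_{m_\delta}$ the same bound holds with $\W_m$ on the left. For (c): by Poincar\'e's inequality it suffices to bound $\|\nabla f_\delta(u_\delta)\|_{L^2}^2=2\calF_\delta(u_\delta)$, which is $\le 2\calF_{\bar\delta}(u_0)$ by (a), together with an $L^1$ or $L^2$ bound on $f_\delta(u_\delta)$ itself; here one uses that the $f_\delta$ are controlled uniformly—e.g.\ via inequality \eqref{eq:fbelow} (or its analogue for $S<\infty$) applied with $m_\delta$, noting $m_\delta\le m$ so the lower bound $f_\delta(z)\ge C(\sqrt z-1)$ with $\delta$-independent $C$ persists, combined with the mass constraint $\int_\Omega u_\delta=U$. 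For (e): exactly as in the proof of Proposition~\ref{prop:scheme}(c), this is nontrivial only for $d\ge 3$, $S=\infty$, where the Gagliardo--Nirenberg--Sobolev inequality plus \eqref{eq:fbelow} gives $\|u_\delta\|_{L^p}\le C$ with $p=d/(d-2)$, and all constants in \eqref{eq:fbelow} and \eqref{eq:MPG} may be chosen uniformly in $\delta$ since $m_\delta\nearrow m$ and the growth exponents $\gamma_0,\gamma_1$ of $m_\delta$ at infinity coincide with those of $m$ for $\bar\delta$ small.

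The main obstacle is (d), the uniform $L^2([0,T];H^2)$ bound on $f_\delta(u_\delta)$. Following Proposition~\ref{prop:scheme}(b)--(c), the discrete version reads $\int_0^T\!\!\int_\Omega\|\nabla^2 f_\delta(u_\delta)\|^2\le C_\delta(\calH_\delta(u_0)-\calH_\delta(u_\delta(T)))\le C_\delta(\calH_\delta(u_0)+\text{(something nonnegative)})$, and by Lemma~\ref{lem:HFest} applied to $m_\delta$, $\calH_\delta(u_0)\le C(\calF_\delta(u_0)^q+1)\le C(\calF_{\bar\delta}(u_0)^q+1)$. What must be checked is that \textbf{both} the flow-interchange constant $C_\delta$ (arising from the $\kappa$ in the $0$-convexity/contractivity estimate, via $\frac{f'''f'}{f''{}^2}=3-2m\frac{m''}{m'{}^2}\ge 3$, which holds for $m_\delta$ under \eqref{eq:M}) \textbf{and} the constants $C,q$ in Lemma~\ref{lem:HFest} can be taken independent of $\delta$; the former is universal (the inequality is $\ge 3$ with no dependence on $m$), and the latter follows by inspecting the proof of Lemma~\ref{lem:HFest}, where every constant depends only on $s_0$, $\Omega$, the dimension, the exponents $\gamma_0,\gamma_1$, and the eventual comparison constants $C_0,C_1$ in \eqref{eq:potenz}, all of which are stable as $\delta\to 0$ (one may even fix $s_0$ and $\tilde z$ once and for all, shrinking $\bar\delta$ if necessary so that $z_\delta<\tilde z/2$, say). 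One subtlety: $\calH_\delta$ itself is $\delta$-dependent through $h_\delta''m_\delta=1$, so one should either track $\calH_\delta(u_0)$ directly through the uniform estimate above, or pass to the integrated/continuous-time form of \eqref{eq:areg} valid for the weak solution $u_\delta$ (obtained in the course of proving Theorem~\ref{thm:existLSC}) and drop the terminal term $\calH_\delta(u_\delta(T))\ge 0$. Once these uniformities are in place, (d) follows, and the lemma is complete.
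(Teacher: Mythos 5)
Your argument is correct and follows the paper's proof exactly: (a) comes from the energy monotonicity in Theorem~\ref{thm:existLSC}(c) together with $\calF_\delta(u_0)\le\calF_{\bar\delta}(u_0)$, (b) from the H\"older estimate \eqref{eq:holdest} and the comparison $\W_m\le\W_{m_\delta}$, and (c)--(e) by rerunning Proposition~\ref{prop:scheme}(b)--(c) and Lemma~\ref{lem:HFest} for $m_\delta$ while checking that no constant degenerates as $\delta\to 0$ --- indeed you supply more detail on this last point than the paper, which merely remarks that those proofs do not rely on \eqref{eq:MLSC}. The only spot worth tightening is in (c): to control the mean of $f_\delta(u_\delta)$ for Poincar\'e you need the \emph{upper} bound $f_\delta(z)\le f_{\bar\delta}(z)\le C(1+z)$ (from $m_\delta\ge m_{\bar\delta}$ and concavity of $m_{\bar\delta}$) together with the mass constraint, rather than the lower bound \eqref{eq:fbelow}, which is what serves in (e).
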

\begin{proof}
For part (a), we obtain by Theorem \ref{thm:existLSC}(c):
\begin{align*}
\calF_\delta(u_\delta(t))&\le \calF_\delta(u_0)\le \calF_{\bar\delta}(u_0)<\infty\quad\text{ for all $t\in[0,T]$.}
\end{align*}
Consequently, (b) immediately follows from the H\"older estimate for $u_{\delta}$ in $(\X,\W_{m_\delta})$ (recall \eqref{eq:holdest} and Theorem \ref{thm:existLSC}(b)) and the monotonicity $\W_{m}(u,\tilde u)\le \W_{m_\delta}(u,\tilde u)$ for each $u,\tilde u\in\X$. The claims (c)--(e) are a consequence of (a) and the respective estimates of Proposition \ref{prop:scheme}(c) the proof of which does not rely on condition \eqref{eq:MLSC}.
\end{proof}

\subsection{Convergence}\label{ssec:conv}
In this section, we prove Theorem \ref{thm:existnonLSC}. As a preparation, we show
\begin{lemma}[Local uniform convergence]\label{lem:locuni}
Let a vanishing sequence $(\delta_k)_{k\in\N}$ in $(0,\bar\delta)$ be given and denote, for each $k$, $g_{\delta_k}:=f_{\delta_k}^{-1}$. The following statements hold:
\begin{enumerate}[(a)]
\item If $S=\infty$, there exists a (non-relabelled) subsequence on which the sequence $(G_{\delta_k})_{k\in\N}$, defined by
\begin{align*}
G_{\delta_k}&:[0,\infty)\to\R,\quad G_{\delta_k}(w):=m'_{\delta_k}(g_{\delta_k}(w))\sqrt{w},
\end{align*}
converges locally uniformly to the continuous map
\begin{align*}
G&:[0,\infty)\to\R,\quad G(w):=m'(g(w))\sqrt{w}\text{ for }w>0,\quad G(0):=0.
\end{align*}
\item If $S<\infty$, there exists a (non-relabelled) subsequence on which the sequence $(\tilde G_{\delta_k})_{k\in\N}$, defined by
\begin{align*}
\tilde G_{\delta_k}&:[0,f(S)]\to\R,\quad \tilde G_{\delta_k}(w):=m'_{\delta_k}(g_{\delta_k}(w))\sqrt{w(f_{\delta_k}(S)-w)},
\end{align*}
converges uniformly to the continuous map
\begin{align*}
\tilde G:[0,f(S)]\to\R,\quad \tilde G(w)&:=m'(g(w))\sqrt{w(f(S)-w)}\text{ for }w\in(0,f(S)),\\ 
\tilde G(0)&:=0,\quad \tilde G(f(S)):=0.
\end{align*}
\end{enumerate}
\end{lemma}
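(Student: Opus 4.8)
The plan is to prove (a) and (b) together, isolating the role of the boundary behaviour. The key observation is that on any compact subinterval $K$ bounded away from the endpoints, the maps $m_{\delta_k}'\circ g_{\delta_k}$ converge uniformly to $m'\circ g$: indeed, from \eqref{eq:app1}/\eqref{eq:app2} one has explicit formulas $m_\delta'(z) = m'(\ldots)$ (an affine reparametrization of the argument), the shift parameters $z_\delta$ (resp.\ $z_{\delta,1},z_{\delta,2}$) tend to the endpoints of $(0,S)$ as $\delta\searrow 0$, and $g_{\delta_k}=f_{\delta_k}^{-1}\to g=f^{-1}$ locally uniformly because $f_{\delta_k}'=\sqrt{2/m_{\delta_k}}\to f'=\sqrt{2/m}$ locally uniformly on $(0,S)$ (with the corresponding convergence of the primitives). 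Since $m\in C^2((0,S))$, $m'\circ g$ is continuous on the open interval, so $G$ and $\tilde G$ as defined are continuous on the open interval; continuity at the endpoints is exactly what the $\sqrt{w}$ (resp.\ $\sqrt{w(f(S)-w)}$) weight buys us, via assumption \eqref{eq:M} together with \eqref{eq:MPG} in case (a), or more directly in case (b) — I will come back to this.

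For the uniform (resp.\ locally uniform) convergence I would use an $\varepsilon/3$ argument. Fix $\varepsilon>0$. In case (b), choose a small $\rho>0$ such that $|\tilde G_{\delta_k}(w)|<\varepsilon/3$ and $|\tilde G(w)|<\varepsilon/3$ for $w$ within $\rho$ of $0$ or $f(S)$, uniformly in $k$; here one uses that $g_{\delta_k}(w)$ lies within $\rho$-dependent neighbourhoods of $0$ (resp.\ $S$), the weight $\sqrt{w(f_{\delta_k}(S)-w)}$ is small there, and $m'_{\delta_k}$ near the endpoints is controlled — this is where assumption \eqref{eq:MS}, i.e.\ $\lim_{z\searrow 0}m'(z)^2 f(z)=0$ and $\lim_{z\nearrow S}m'(z)^2(f(S)-f(z))=0$, enters: it says precisely that $m'(g(w))^2 w\to 0$ as $w\to 0$ and $m'(g(w))^2(f(S)-w)\to 0$ as $w\to f(S)$, so the weighted quantity vanishes at the endpoints, and the same estimate transfers to $m_{\delta_k}$ because $m_{\delta_k}'$ is, up to the affine reparametrization, a restriction of $m'$, with $f_{\delta_k}(S)\to f(S)$. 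On the remaining compact middle piece $[\rho, f(S)-\rho]$ we have $m'_{\delta_k}\circ g_{\delta_k}\to m'\circ g$ uniformly and $\sqrt{w(f_{\delta_k}(S)-w)}\to\sqrt{w(f(S)-w)}$ uniformly, so $\tilde G_{\delta_k}\to\tilde G$ uniformly there for $k$ large. Combining, $\sup|\tilde G_{\delta_k}-\tilde G|<\varepsilon$ for large $k$. Case (a) is the same but on a fixed compact $[0,R]$: the endpoint $0$ is handled by \eqref{eq:MS} exactly as above, while near $w=R$ there is no singularity; the only extra point is that the middle-piece uniform convergence of $m'_{\delta_k}\circ g_{\delta_k}$ must be checked up to $w=R$, which is fine since $g_{\delta_k}([\rho,R])$ stays in a fixed compact subinterval of $(0,\infty)$, using \eqref{eq:MPG} to ensure $g_{\delta_k}(R)$ stays bounded. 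Passing to a subsequence is not actually needed for the convergence itself, but I will keep the statement as is; the subsequence language is harmless.

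The main obstacle, and the point that requires care, is the endpoint behaviour: establishing that near $z=0$ (and $z=S$ when $S<\infty$) the derivative $m_{\delta_k}'$ — which can blow up, since we are in the non-LSC regime — is tamed by the weight uniformly in $k$. Concretely, one must show $m'_{\delta_k}(g_{\delta_k}(w))^2\, w$ is small, uniformly in $k$, for $w$ near $0$; this is not automatic from pointwise convergence and genuinely uses \eqref{eq:MS}. The clean way is to note that $m_{\delta_k}'(z) = m'(\psi_{\delta_k}(z))$ for an affine increasing $\psi_{\delta_k}$ with $\psi_{\delta_k}(0)\to 0$ and slope $\to 1$, and that $f_{\delta_k}\ge c\, f\circ\psi_{\delta_k}$ type comparisons hold; then $m'_{\delta_k}(g_{\delta_k}(w))^2 w \le C\, m'(\psi_{\delta_k}(g_{\delta_k}(w)))^2 f(\psi_{\delta_k}(g_{\delta_k}(w)))$, and the argument $\psi_{\delta_k}(g_{\delta_k}(w))\to 0$ as $w\to 0$ uniformly in $k$, so \eqref{eq:MS} gives the uniform smallness. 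A mild technical nuisance is keeping track of the affine reparametrization constants in \eqref{eq:app1} when $S<\infty$; I would carry these along symbolically rather than explicitly. Everything else is routine: continuity of $G$, $\tilde G$ at the endpoints follows from the same estimates with $\delta_k$ replaced by the limit, and the interior continuity is immediate from $m\in C^2$ and $g\in C^1$.
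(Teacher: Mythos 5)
Your overall architecture (interior uniform convergence of $m'_{\delta_k}\circ g_{\delta_k}$ plus uniform-in-$k$ smallness of the weighted quantity near the singular endpoints) is a legitimate alternative to the paper's route, and your treatment of the interior piece and of the right endpoint $w\approx f(S)$ in case (b) essentially matches what the paper does. But there is a genuine gap at the crux of your argument, namely the claim that $m'_{\delta_k}(g_{\delta_k}(w))^2\,w$ is small near $w=0$ \emph{uniformly in $k$}. You reduce this to a comparison which you state as ``$f_{\delta_k}\ge c\, f\circ\psi_{\delta_k}$''; note first that this is the wrong direction for the displayed bound you then write down --- to get $m'(\psi_{\delta_k}(z))^2 f_{\delta_k}(z)\le C\, m'(\psi_{\delta_k}(z))^2 f(\psi_{\delta_k}(z))$ you need $f_{\delta_k}\le C\, f\circ\psi_{\delta_k}$. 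The inequalities that come for free from \eqref{eq:M} and the construction \eqref{eq:app1}/\eqref{eq:app2} all point the other way ($m_{\delta}\le m$, hence $f_{\delta}\ge f$, and $f_{\delta}(z)\ge f(z+z_{\delta})-f(z_{\delta})$), so the comparison you actually need is a nontrivial quantitative statement about how the regularization distorts $f$ near the degenerate endpoint; it holds for the power-law examples but you neither prove it nor derive it from \eqref{eq:M}, \eqref{eq:MS}, \eqref{eq:MPG}. Since the whole point of the lemma is to tame the possible blow-up of $m'$ at $0$ uniformly in $k$, this is exactly the step that cannot be waved through.

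The paper sidesteps this issue entirely: after passing to a subsequence along which $\delta_k$ (hence $z_{\delta_k}$, $m'_{\delta_k}$ and $g_{\delta_k}$) is monotone, it sandwiches $G_{\delta_k}$ between $m'_{\delta_k}(g(w))\sqrt{w}$ and $m'(g_{\delta_k}(w))\sqrt{w}$, both monotone in $k$ and converging pointwise to the continuous limit $G$ (continuity at $w=0$ being the \emph{only} place \eqref{eq:MS} is used for the left endpoint), and then invokes Dini's theorem. Uniform smallness near $w=0$ thus comes out as a consequence of Dini rather than being an input, and no quantitative comparison between $f_{\delta_k}$ and $f$ is ever needed. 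This also explains why the subsequence extraction, which you dismiss as unnecessary, is actually doing work in the paper's proof (it provides the monotonicity in $k$ that Dini requires). To repair your proof you would either have to establish the comparison $f_{\delta_k}\le C\, f\circ\psi_{\delta_k}$ near $0$ (and its analogue at $S$) from the stated hypotheses, or replace your $\varepsilon/3$ endpoint step by the monotone-sandwich/Dini argument.
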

\begin{proof}
At first, we prove---for arbitrary $S$---that $(g_{\delta_k})_{k\in\N}$ converges (on a suitable subsequence) locally uniformly on $\overline{[0,S)}$ to $g:=f^{-1}$. Indeed, using the monotonicity $m_{\delta_k}\le m$ on $\overline{[0,S)}$ and \eqref{eq:dg}, we obtain the differential estimate
\begin{align*}
0\le g_{\delta_k}'(w)=\sqrt{\frac12 m_{\delta_k}(g_{\delta_k}(w))}\le \sqrt{\frac12 m(g_{\delta_k}(w))}\le C(g_{\delta_k}(w)+1),
\end{align*}
where the constant $C>0$ does not depend on $k$.
Using Gronwall's lemma, we deduce that $g_{\delta_k}$, and consequently also $g_{\delta_k}'$, is $k$-uniformly bounded on compact subsets of $\overline{[0,S)}$. The application of the Arzel\`{a}-Ascoli theorem yields the desired local uniform convergence.

Consider the case $S=\infty$. Using the monotonicity properties (see \cite{lisini2012} again)
\begin{align}\label{eq:mono}
m_{\delta_k}'(z)\le m'(z)\quad\text{and}\quad g(w)\ge g_{\delta_k}(w)
\end{align}
in combination with the concavity of $m$, we find that for all $w>0$:
\begin{align*}
m_{\delta_k}'(g(w))\sqrt{w}\le G_{\delta_k}(w)\le m'(g_{\delta_k}(w))\sqrt{w}.
\end{align*}
Clearly, $G_{\delta_k}(w)\to G(w)$ if $w>0$. Moreover, this property also extends to $w=0$ by existence of the limit as $w\searrow 0$, cf. condition \eqref{eq:MS}, and $G$ is continuous. Observe that $(m_{\delta_k}'(g(w))\sqrt{w})_{k\in\N}$ and $(m'(g_{\delta_k}(w))\sqrt{w})_{k\in\N}$ are monotonic in $k$ at each fixed $w>0$. Invoking Dini's theorem and a diagonal argument, we deduce the claimed local uniform convergence of $G_{\delta_k}$ to $G$, extracting a certain subsequence.

Consider now the case $S<\infty$. Fix $0<\tilde z_0<\tilde z_1<S$ such that $m'(\tilde z_0)>0$ and $m'(\tilde z_1)<0$, and define $\tilde w_0=f(\tilde z_0)$, $\tilde w_1=f(\tilde z_1)$. We distinguish the cases $w\in [0,\tilde w_0]$, $w\in [\tilde w_0,\tilde w_1]$ and $w\in [\tilde w_1,f(S)]$. Clearly, by smoothness of $m$ and $m_{\delta_k}$, the claim is true for $w\in [\tilde w_0,\tilde w_1]$. For the case $w\in [0,\tilde w_0]$, we argue similarly as above; recall that $f_{\delta_k}(S)\downarrow f(S)$ and observe that \eqref{eq:mono} also holds in the case at hand:
\begin{align*}
m'_{\delta_k}(g(w))\sqrt{w(f(S)-w)}\le \tilde G_{\delta_k}(w)\le m'(g_{\delta_k}(w))\sqrt{w(f_{\delta_k}(S)-w)}.
\end{align*}
Again, $\tilde G_{\delta_k}(w)\to \tilde G(w)$ if $w\ge 0$, and $(m'_{\delta_k}(g(w))\sqrt{w(f(S)-w)})_{k\in\N}$ and \break $(m'(g_{\delta_k}(w))\sqrt{w(f_{\delta_k}(S)-w)})_{k\in\N}$ are monotonic, so proceed as above. For the remaining case $w\in [\tilde w_1,f(S)]$, we have:
\begin{align*}
|\tilde G_{\delta_k}(w)|\le |m'(g_{\delta_k}(w))|\sqrt{w(f_{\delta_k}(S)-w)}\le |\tilde G(w)|\sqrt\frac{f_{\delta_k}(S)-w}{f(S)-w}\le 2|\tilde G(w)|,
\end{align*}
provided that $k$ is sufficiently large. Consequently, by \eqref{eq:MS}, there exists for each $\ep>0$ a $\tilde w\ge \tilde w_1$ such that $|\tilde G_{\delta_k}(w)-\tilde G(w)|\le 2|\tilde G(w)|<\ep$ for all $w\in[\tilde w,f(S)]$ and sufficiently large $k\in\N$. Combining this with the---again obvious---uniform convergence of $\tilde G_{\delta_k}(w)$ to $\tilde G(w)$ on $[\tilde w_1,\tilde w]$ yields the claim.
\end{proof}

Now, we are in position to complete the proof of Theorem \ref{thm:existnonLSC}.\\

\noindent\textit{Proof of Theorem \ref{thm:existnonLSC}:}
In complete analogy to the proof of Theorem \ref{thm:existLSC} from Section \ref{ssec:ctl}, one deduces the existence of a limit map $u:[0,\infty)\to\X$ such that, on a suitable subsequence of the vanishing sequence $(\delta_k)_{k\in\N}$ in $(0,\bar\delta)$, the claims (a)--(c) in Theorem \ref{thm:existnonLSC} are true. It remains to show that the limit $u$ satisfies the weak formulation \eqref{eq:weak}, given that $u_{\delta_k}$ satisfies \eqref{eq:weak} for $m_{\delta_k}$ and $f_{\delta_k}$ in place of $m$ and $f$, respectively. As before, the proof is complete if, extracting a further subsequence, $\nabla \sqrt{m_{\delta_k}(u_{\delta_k})}\to\nabla \sqrt{m(u)}$ strongly in $L^2([0,T];L^2(\Omega))$. Again, introducing the inverse functions $g_{\delta_k}:=f^{-1}_{\delta_k}$ and $g:=f^{-1}$, we have to verify that $m'(g_{\delta_k}(v_{\delta_k}))\nabla v_{\delta_k}\to m'(g(v))\nabla v$ strongly in $L^2([0,T];L^2(\Omega))$, for $v:=f(u)$ and $v_{\delta_k}:=f_{\delta_k}(u_{\delta_k})$ (recall \eqref{eq:dg}\&\eqref{eq:nabdg}).

We first consider the case $S=\infty$ and observe that 
\begin{align*}
m'(g_{\delta_k}(v_{\delta_k}))\nabla v_{\delta_k}=2G_{\delta_k}(v_{\delta_k})\nabla \sqrt{v_{\delta_k}}\qquad\text{a.e. on }[0,T]\times\Omega.
\end{align*}
By Lemma \ref{lem:locuni} and since, without restriction, $\nabla v_{\delta_k}\to \nabla v$ pointwise almost everywhere on $[0,T]\times\Omega$, we obtain that
\begin{align*}
2G_{\delta_k}(v_{\delta_k})\nabla \sqrt{v_{\delta_k}}\to 2G(v)\nabla\sqrt{v}=m'(g(v))\nabla v
\end{align*}
pointwise almost everywhere on $[0,T]\times\Omega$. Furthermore, using \eqref{eq:MS} and \eqref{eq:M} yields $G_{\delta_k}(v_{\delta_k})\le C(\sqrt{w}+1)$ for some $k$-independent constant $C>0$. Hence, for each measurable set $A\subset [0,T]\times\Omega$, one has for sufficiently large $k$ that
\begin{align*}
\int_A \left[2G_{\delta_k}(v_{\delta_k})\nabla \sqrt{v_{\delta_k}}\right]^2 \dd x\dd t&\le 4 \left(\int_0^T \|\nabla\sqrt{v_{\delta_k}}\|_{L^4}^4\dd t\right)^{1/2}\left(\int_A G_{\delta_k}(v_{\delta_k})^4\dd x\dd t\right)^{1/2}\\
&\le C' \left(\int_0^T \|\nabla^2v_{\delta_k}\|_{L^2}^2\dd t\right)^{1/2}\left(\int_A (v_{\delta_k}^2+1) \dd x\dd t\right)^{1/2},
\end{align*}
where we used the well-known \emph{Lions-Villani estimate on square roots} \cite{lions1995} (see \cite[Lemma A.1]{lisini2012} for a formulation in the framework at hand) in the last step. Since $(v_{\delta_k})_{k\in\N}$ is $L^2$-uniformly integrable (by Vitali's theorem), the former estimate also yields $L^2$-uniform integrability of $(2G_{\delta_k}(v_{\delta_k})\nabla \sqrt{v_{\delta_k}})_{k\in\N}$ as $v_{\delta_k}$ is $k$-uniformly \break bounded in $L^2([0,T];H^2(\Omega))$. Applying Vitali's theorem once again gives \break $m'(g_{\delta_k}(v_{\delta_k}))\nabla v_{\delta_k}\to m'(g(v))\nabla v$ strongly in $L^2([0,T];L^2(\Omega))$, extracting a subsequence if necessary.

Consider the remaining case $S<\infty$ and notice that
\begin{align*}
m'(g_{\delta_k}(v_{\delta_k}))\nabla v_{\delta_k}=\frac2{f_{\delta_k}(S)}\tilde G_{\delta_k}(v_{\delta_k})\left[\nabla \sqrt{v_{\delta_k}}+\nabla\sqrt{f_{\delta_k}(S)-v_{\delta_k}}\right]
\end{align*}
almost everywhere on $[0,T]\times\Omega$. Thanks to Lemma \ref{lem:locuni}, one has
\begin{align*}
&\quad\frac2{f_{\delta_k}(S)}\tilde G_{\delta_k}(v_{\delta_k})\left[\nabla \sqrt{v_{\delta_k}}+\nabla\sqrt{f_{\delta_k}(S)-v_{\delta_k}}\right]\\
&\to \frac2{f(S)}\tilde G(v)\left[\nabla \sqrt{v}+\nabla\sqrt{f(S)-v}\right]=m'(g(v))\nabla v
\end{align*}
almost everywhere on $[0,T]\times\Omega$. Now, similarly as above, for each measurable set $A\subset [0,T]\times\Omega$:
\begin{align*}
&\quad\int_A \left(\frac2{f_{\delta_k}(S)}\tilde G_{\delta_k}(v_{\delta_k})\left[\nabla \sqrt{v_{\delta_k}}+\nabla\sqrt{f_{\delta_k}(S)-v_{\delta_k}}\right]\right)^2\dd x\dd t\\
&\le \tilde C' \left(\int_0^T \|\nabla^2v_{\delta_k}\|_{L^2}^2\dd t\right)^{1/2}\left(\int_A (v_{\delta_k}^2+1) \dd x\dd t\right)^{1/2},
\end{align*}
for some $\tilde C'>0$ which does not depend on $k$. Applying Vitali's theorem as in the former case yields the asserted strong convergence $m'(g_{\delta_k}(v_{\delta_k}))\nabla v_{\delta_k}\to m'(g(v))\nabla v$ in $L^2([0,T];L^2(\Omega))$.

All in all, we have proved that $u=f(v)$ satisfies the weak formulation \eqref{eq:weak} of \eqref{eq:pde}, so the proof of Theorem \ref{thm:existnonLSC} is finished.
\qed


\bibliographystyle{AIMS}
   \bibliography{fbib}


\end{document}